\documentclass [11pt] {article}
\usepackage{amsfonts}
\usepackage{color}
\usepackage{amsthm}
\usepackage{amsmath}
\usepackage{graphicx}
\usepackage{url}
\usepackage{txfonts}
\usepackage{epstopdf}
\usepackage{multirow}
\usepackage{booktabs}
\usepackage{caption}
\usepackage{float}
\renewcommand{\baselinestretch}{1.2}
\setlength{\topmargin} {0pt}
\setlength{\oddsidemargin}{0pt}
\setlength{\evensidemargin}{0pt}
\setlength{\textwidth}{460pt}
\setlength{\textheight}{640pt}

\newtheorem{theorem}{Theorem}[section]

\newtheorem{lemma}[theorem]{Lemma}

\newtheorem{algorithm}[theorem]{Algorithm}
\renewcommand{\thefootnote}

\newcommand{\be}{\begin{equation}}
\newcommand{\ee}{\end{equation}}

\begin{document}

\renewcommand{\baselinestretch}{1.2}

\title {Finding a Nonnegative Solution to an M-Tensor Equation\thanks{Supported by the NSF of China grant 11771157.}}

\author{Dong-Hui Li\footnote{School of Mathematical Sciences, South China Normal University, Guangzhou, 510631,
China. E-mail: lidonghui@m.scnu.edu.cn} \and Hong-Bo Guan\footnotemark[2] \ \footnote{School of Mathematical Sciences and Energy Engineering,
Hunan Institute of Technology, HengYang, 421002, China.E-mail: hongbo\_guan@m.scnu.edu.cn}\and  Xiao-Zhou Wang\thanks{Department of
Applied Mathematics, Hong Kong Polytechnic University, Hong Hom, Hong Kong.E-mail:xzhou.wang@connect.polyu.hk}
}

\maketitle

\begin{abstract}
We are concerned with the tensor equation with an M-tensor, which we call the M-tensor equation.
We first derive a necessary and sufficient condition for an M-tensor equation to have
nonnegative solutions.  We then develop a monotone iterative
method to find a nonnegative solution to an M-tensor equation.
The method can be regarded as an approximation to Newton's method for
solving the equation. At each iteration, we solve a system of linear
equations. An advantage of the proposed method is that the coefficient
matrices of the linear systems are independent of the iteration.  We show that if the initial point  is appropriately
chosen, then the sequence of iterates generated by the method converges
to a nonnegative solution of the M-tensor equation monotonically and linearly.
At last, we do numerical experiments to test the proposed methods.
The results show the efficiency of the proposed methods.
\end{abstract}

{\bf Keywords} M-tensor equation, iterative method, monotone convergence.

\section{Introduction}

Consider the tensor equation
 \begin{equation}\label{multi-linear equation}
{\cal A}x^{m-1} = b,
\end{equation}
where $x,b\in R^n$ and ${\cal A}$ is an ${m}$th-order ${n}$-dimensional tensor that takes the form
\[
{\cal A} = (a_{i_1i_2\ldots i_m}), \quad  a_{i_1i_2\ldots i_m}\in R, \quad 1\le i_1, i_2, \cdots, i_m \le n,
\]
and ${{\cal A}x^{m-1}}\in R^n$  with elements
 \[
({\cal A}x^{m-1})_i = \sum_{i_2, \ldots, i_m=1}^na_{ii_2\ldots i_m}x_{i_2}\cdots x_{i_m},\quad i=1,2,\ldots,n.
\]
Tensor equation is also called multilinear equation. It appears in many practical fields including data mining and numerical partial differential
equations \cite {4,Ding-Wei-16,8,Fan-Zhang-Chu-Wei-17,Kressner-Tobler-10, Li-Xie-Xu-17, Li-Ng-15, Xie-Jin-Wei-2017}.

We denote the set of all ${m}$th-order ${n}$-dimensional tensors by ${\cal T}(m,n)$.
A tensor ${\cal A} = (a_{i_1i_2\ldots i_m})\in {\cal T}(m,n)$ is called symmetric tensor if its elements $a_{i_1i_2\ldots i_m}$
are invariant under any permutation of their indices $(i_1, i_2, \cdots, i_m)$.
The set of all ${m}$th-order ${n}$-dimensional symmetric tensors is denoted by ${\cal ST}(m,n)$.
In the case ${\cal A}\in {\cal ST}(m,n)$, it holds that $\nabla ({\cal A}x^m)=m{\cal A}x^{m-1}$.
Consequently, the tensor equation (\ref{multi-linear equation}) is just the first order necessary condition of the
optimization problem
\[
\min f(x)=\frac 1m {\cal A}x^m-b^Tx.
\]

Tensor equation is a special system of nonlinear equations.
It may be solved by existing numerical methods such as Newton and quasi-Newton methods
for solving nonlinear equations. However, in some cases, Newton's method may fail
or be only linearly convergent
if
the Jacobian matrix of $F$ is singular.

\noindent
{\bf Example 1.1.} Consider  the quadratic polynomial equation
\begin{equation}\label{ex-1}
F(x)= \left ( \begin{array}{l}
x_1^2-1\vspace{1mm} \\
x_1^2+x_2^2-1 \\
-x_1^2+x_2^2+x_3^2+1
\end{array}\right )=0.
\end{equation}
It corresponds to the tensor equation (\ref{multi-linear equation}) with $b=(1,1,-1)^T$ and ${\cal A}=(a_{ijk})$
whose elements are
\[
a_{111}=a_{222}=a_{333}=1,\; a_{211}=a_{322}=1,\; a_{311}=-1.
\]
and all other elements are zeros. The equation have two solutions
$x^*=(1, 0,0)^T$ and $ \tilde x^*=(-1, 0,0)^T$.

The Jacobian of $F$ at $x$ is
\[
F'(x)= 2 \left ( \begin{array}{ccc}
x_1 & 0   & 0 \vspace{1mm} \\
x_1 & x_2 & 0 \vspace{1mm} \\
-x_1 & x_2 & x_3
\end{array}\right ) = 2\left ( \begin{array}{ccc}
1 & 0 & 0 \vspace{1mm} \\
1 & 1 & 0 \vspace{1mm} \\
-1 & 1 & 1
\end{array}\right ) \left ( \begin{array}{ccc}
x_1 &  & \vspace{1mm} \\
  & x_2 & \vspace{1mm} \\
    & & x_3
\end{array}\right ).
\]
It is singular if $x_1=0$, $x_2=0$ or $x_3=0$.
In particular, $F'(x^*)$ and $F'(\tilde x^*)$ are singular.

Let $x$ be the current iterate that has no zero elements. The next iterate
generated by Newton's method $x^+$ is determined by
\begin{eqnarray*}
\lefteqn {x^+ = x  - F'(x)^{-1}F(x) }\\
    &=& x-\frac 12 \left ( \begin{array}{ccc}
x_1^{-1} &  & \vspace{1mm} \\
  & x_2^{-1} & \vspace{1mm} \\
  & & x_3^{-1}
\end{array}\right )\left ( \begin{array}{ccc}
1 & 0 & 0 \vspace{1mm} \\
-1 & 1 & 0 \vspace{1mm} \\
2 & -1 & 1
\end{array}\right ) \Big [\left ( \begin{array}{ccc}
1 & 0 & 0 \vspace{1mm} \\
1 & 1 & 0 \vspace{1mm} \\
-1& 1 & 1
\end{array}\right )\left ( \begin{array}{c}
x_1^2\\ x_2^2\\ x_3^2 \end{array}\right )-
\left ( \begin{array}{c}
1\\ 1\\ -1 \end{array}\right )\Big ] \\
&=& \frac 12 \left ( \begin{array}{c}
x_1\\ x_2\\ x_3 \end{array}\right )+\frac 12 \left ( \begin{array}{c}
x_1^{-1}\\ 0\\ 0\end{array}\right ).
\end{eqnarray*}
It implies
\[
x^+-x^*= \frac 12 \left ( \begin{array}{c}
(x_1-x_1^*) + (x_1^{-1}-(x_1^*)^{-1}) \\ x_2-x_2^*\\ x_3-x_3^* \end{array}\right ),
\]
which shows the linear convergence of $\{x_k\}$.

On the other hand the tensor equation (\ref{ex-1}) can be written as
\[
A \left ( \begin{array}{c}
x_1^2\\x_2^2\\ x_3^2\end{array}\right )
=b,\quad \mbox{with }\quad A=
\left ( \begin{array}{ccc}
1 & 0 & 0 \vspace{1mm} \\
1 & 1 & 0 \vspace{1mm} \\
-1& 1 & 1
\end{array}\right ), \; b=\left ( \begin{array}{c}
1\\1\\ -1\end{array}\right ).
\]
It can be solved easily by solving the system of linear equation
$Ay=b$ first and then get the solution of the tensor equation by
letting $x=\pm y^{1/2}$.

\noindent
{\bf Example 1.2}
Let consider a more general equation:
\[
F(x)=\frac {1}{m-1} M x^{[m-1]}- b=0,
\]
where  $M\in R^{n\times n}$ is nonsingular, $b\in R^n$ and $x^{[m-1]}=(x_1^{m-1}, \ldots, x_n^{m-1})^T$.
It is a tensor equation in the form (\ref{multi-linear equation}) where the elements of $\cal A$ are
\[
a_{ij\ldots j}=m_{ij},\quad i,j=1,2,\ldots,n
\]
and all other elements of $\cal A$ are zero. It is easy to see that the Jacobian of $F$
at $x$ is
\[
F'(x)= M \; \mbox{diag}(x_1^{m-2}, x_2^{m-2},\ldots x_n^{m-2}).
\]
It is singular  even if $M$ is nonsingular, if there exists an $x_i=0$.
As a result, Newton's method may fail to work if some iterate $x_k$ has zero elements.

On the other hand, if $M$ is nonsingular, the equation can be solved easily by solving the system
$\frac{1}{m-1}My=b$ first and then letting $x=y^{[1/(m-1)]}$.

The last two examples indicate that it is important to develop
special iterative methods for solving the tensor equation  (\ref{multi-linear equation})
using the special structure of the tensor $\cal A$. It is the major purpose of the paper.

In this paper, we will pay particular attention to tensor equations (\ref{multi-linear equation}) where
the coefficient tensor ${\cal A}$ is an M-tensor.
Tensor ${\cal A}=(a_{i_1i_2\ldots i_m})\in {\cal T}(m,n)$ is called a Z-tensor if it can be written as
\begin{equation}\label{M-tensor}
{\cal M}=s{\cal I}-{\cal B},
\end{equation}
where $s>0$, $\cal I$ is the identity tensor whose diagonals are all ones and all off diagonal elements
are zeros, and ${\cal B}\ge 0$ is a nonnegative tensor in the sense that all its elements are nonnegative.
Tensor ${\cal A}$ is called M-tensor
if it can be written as (\ref{M-tensor}) and satisfies $s\ge \rho ({\cal B})$,
where $\rho ({\cal B})$ is the spectral radius of tensor ${\cal B}$, that is
\[
\rho(\cal B)= \max \left\{\left| \lambda \right|: \lambda \text{ is an eigenvalue of } \cal{B}\right \}.
\]
If $s> \rho ({\cal B})$, then ${\cal A}$ is called a strong or nonsingular M-tensor \cite{Ding-Qi-Wei-13}.
In the case  ${\cal A}$ is an M-tensor, we call the equation (\ref{multi-linear equation}) M-tensor
equation and abbreviate it as M-TEQ.
An interesting property of the M-TEQ is that if ${\cal A}$ is a strong M-tensor, then for every positive vector $b$
the tensor equation (\ref{multi-linear equation}) has a unique positive solution, namely, all entries of the solution are positive \cite{Ding-Wei-16}.

The study in the numerical methods for solving tensor equation has begun only a few years ago. Most of
them focus on solving the M-TEQ.
 Ding and Wei \cite{Ding-Wei-16} extended the classical iterative methods for solving system of linear equations
 including the Jacobi method,
 the Gauss-Seidel method, and successive overrelaxation method to solve the M-TEQ. The
 methods are linearly convergent to a nonnegative solution of the equation.
 Han \cite{Han-17} proposed a homotopy method for finding the unique positive solution of the M-TEQ with strong M-tensor
 and a positive right side vector $b$.
 Liu, Li and Vong \cite{Liu-Li-Vong-18} proposed a tensor splitting method for solving the M-TEQ.
 An advantage of the method is that at each iteration, only a system of linear equation needs to be solved.
 The coefficient matrix of the linear system does not depend on the iteration.
 The sequence of iterates $\{x_k\}$  generated by the method converges to a nonnegative solution
 of the equation monotonically if the initial point $x_0$ is restricted to the set
 $\{x\in R^n\;|\; 0<{\cal A}x^{m-1}\le b\}$.
 Quite recently,
He, Ling, Qi and Zhou \cite{He-Ling-Qi-Zhou-18} proposed a Newton-type method to solve the M-TEQ and
established its quadratic convergence.
Li, Xie and Xu \cite{Li-Xie-Xu-17} also extended the classic splitting methods for solving system of linear equations
to solving symmetric tensor equations. The methods in \cite{Li-Xie-Xu-17} can
solve those equations (\ref{multi-linear equation}) where ${\cal A}$ are not necessary M-tensors.
Li, Dai and Gao \cite{Dai-2019} developed an alternating projection method for a class of tensor equations
and established its global convergence and linear convergence.
Related work can also be found in \cite{4, Li-Ng-15, Lv-Ma-18,Xie-Jin-Wei-2017a, Xie-Jin-Wei-2017,Yan-Ling-18}.
Just when this article is about to be completed, we knew the new report by Bai, He, Ling and Zhou \cite{Bai-He-Ling-Zhou-18}
where the authors proposed an iterative method for finding a nonnegative solution to M-tensor equations.

The existing methods for solving M-TEQ focus on finding a positive solution of the equation under
the restriction $b>0$.
 In this paper, we further study numerical methods for solving M-TEQ (\ref{multi-linear equation}).
Our purpose is to find the largest nonnegative solution of the equation without the requirement of $b>0$.
We first give a necessary and sufficient condition for the existence of the nonnegative solution of the
M-TEQ. The result reals an interesting property that
if an M-TEQ has a nonnegative solution, then it has a largest nonnegative solution.

We then propose an iterative method to solve the M-TEQ with strong M-tensor.
We first split the coefficient M-tensor into two parts. Based on the splitting form, we develop an
 approximation Newton method. At each iteration, we solve a system of linear equations.
 The coefficient matrix of linear system is independent of the iteration.
  We show that if the initial point is appropriately chosen, then the generated sequence of iterates
  converges to a nonnegative solution of the equation without the restriction that $b$ is positive.
   We also do
 numerical experiments to test the proposed method. The results show that the methods are very efficient.

The rest of this paper is organized as follows. In Section 2, we derive
a necessary and sufficient condition for an M-tensor equation to have a nonnegative
solution. It particularly shows an interesting result that if an M-tensor equation has more than one nonnegative solutions,
then it has a largest nonnegative solution.
In Section 3, we  propose a monotone iterative method
for solving the M-TEQ with strong M-tensor and establish its monotone
 convergence. In Section 4,
we make an improvement to the method proposed  in Section 3 and establish its monotone convergence.
At last, we do some numerical experiments to test the proposed method in Section 5.
We conclude the paper by giving some final remark.

\section{The Existence of Nonnegative Solutions to the M-Tensor Equation}
\setcounter{equation}{0}

Consider the tensor equation
\begin{equation}\label{eqn:tensor}
F(x)={\cal M}x^{m-1}-b=0,
\end{equation}
where ${\cal M}=(m_{i_1i_2\ldots i_m})\in {\cal T}(m,n)$ and $b\in R^n$.
We are interested in the nonnegative solution of an M-tensor equation.
It was proved by Ding and Wei \cite{Ding-Wei-16} that if $\cal M$ is a strong M-tensor and
$b$ is positive,
then the equation has unique positive solution.
Liu, Li and Vong \cite{Liu-Li-Vong-18} also obtained some existence and
uniqueness of the positive/nonnegative solutions for the M-TEQ via tensor splitting technique.

Similar to Theorem 5.4 in \cite{Gowda-Luo-Qi-Xiu-15}, we now derive a sufficient condition
for the existence and uniqueness of the nonnegative solutions to an M-TEQ.
We first introduce the concept of the so called majorization matrix of a tensor
${\cal M}\in {\cal T}(m,n)$ \cite {Li-Ng-14,Liu-Li-Vong-18,Pearson-10}.
It is a matrix $M=(m_{ij})\in R^{n\times n}$ whose elements are
\begin{equation}\label{def:M}
m_{ij}=m_{ij\ldots j}, \quad i,j=1,2,\ldots,n.
\end{equation}

The theorem about the existence and uniqueness of the nonnegative solution of an M-TEQ is stated and proved below.
\begin{theorem}\label{th:unique}
Suppose ${\cal M} = (m_{i_1\ldots i_m})\in {\cal T}(m,n)$ is a strong M-tensor
such that for each index $i$,
\[
m_{i i_2\ldots i_m} = 0,\quad \mbox{whenever }\; i_j\neq i_k \; \;\mbox {for some } j\neq k.
\]
Then, the M-TEQ has a unique nonnegative solution if and only if the unique solution
of the M-LEQ
\[
My=b
\]
is nonnegative, where matrix $M=(m_{ij})\in R^{n\times n}$ is the majorization matrix of $\cal M$.
\end{theorem}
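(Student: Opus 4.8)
The plan is to exploit the very restrictive structure imposed on $\mathcal{M}$ to collapse the M-TEQ into the linear system $My=b$ after a componentwise monomial change of variables. Under the hypothesis the only possibly nonzero entries of $\mathcal{M}$ are those of the form $m_{ij\ldots j}=m_{ij}$, so that for every $x\in R^n$,
\[
(\mathcal{M}x^{m-1})_i=\sum_{j=1}^n m_{ij\ldots j}\,x_j^{m-1}=\sum_{j=1}^n m_{ij}\,x_j^{m-1}=(Mx^{[m-1]})_i ,
\]
with $x^{[m-1]}=(x_1^{m-1},\ldots,x_n^{m-1})^T$ and $M$ the majorization matrix (\ref{def:M}). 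Hence $F(x)={\cal M}x^{m-1}-b=0$ is equivalent to $Mx^{[m-1]}=b$. Since $t\mapsto t^{m-1}$ is a strictly increasing bijection of $[0,\infty)$ onto itself, the map $x\mapsto x^{[m-1]}$ is a bijection of the nonnegative orthant onto itself, with inverse $z\mapsto z^{[1/(m-1)]}$; thus $x\ge 0$ solves the M-TEQ if and only if $z:=x^{[m-1]}\ge 0$ solves $Mz=b$.

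Next I would show that $M$ is a nonsingular M-matrix, which makes $M$ invertible with $M^{-1}\ge 0$, so $My=b$ has the unique solution $y=M^{-1}b$ and the nonnegativity of that solution means precisely $M^{-1}b\ge 0$. Writing $\mathcal{M}=s\mathcal{I}-\mathcal{B}$ as in (\ref{M-tensor}) with $s>\rho(\mathcal{B})$ and $\mathcal{B}\ge 0$, the majorization matrix splits as $M=sI-B$ where $B\ge 0$ is the majorization matrix of $\mathcal{B}$. The crux is the spectral inequality $\rho(B)\le\rho(\mathcal{B})$: given an eigenpair $(\lambda,y)$ of $B$ with $y\ne 0$, pick $x$ over $\mathbb{C}$ with $x_j^{m-1}=y_j$ for all $j$ (so $x\ne 0$); then the structure of $\mathcal{B}$ gives $\mathcal{B}x^{m-1}=Bx^{[m-1]}=By=\lambda y=\lambda x^{[m-1]}$, so $\lambda$ is an eigenvalue of $\mathcal{B}$ and $|\lambda|\le\rho(\mathcal{B})$; taking $|\lambda|=\rho(B)$ gives $\rho(B)\le\rho(\mathcal{B})<s$, whence $M=sI-B$ is a nonsingular M-matrix.

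Finally I would assemble the equivalence. For the ``only if'' direction, if the M-TEQ has a nonnegative solution $x$, then $M^{-1}b=x^{[m-1]}\ge 0$, and this is the unique solution of $My=b$. For the ``if'' direction, if $y=M^{-1}b\ge 0$, then $x:=y^{[1/(m-1)]}\ge 0$ satisfies $Mx^{[m-1]}=My=b$, i.e.\ ${\cal M}x^{m-1}=b$; moreover any nonnegative solution $x'$ obeys $x'^{[m-1]}=M^{-1}b=x^{[m-1]}$, so $x'=x$ by uniqueness of the nonnegative $(m-1)$st root. This gives exactly ``the M-TEQ has a unique nonnegative solution $\iff$ the unique solution of $My=b$ is nonnegative'' (and, incidentally, shows that for this class of tensors ``has a nonnegative solution'' and ``has a unique nonnegative solution'' coincide).

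The change of variables and the bookkeeping in the last paragraph are routine; the one step requiring care is verifying that the majorization matrix $M$ of the strong M-tensor $\mathcal{M}$ is itself a nonsingular M-matrix, i.e.\ establishing $\rho(B)\le\rho(\mathcal{B})$ — alternatively this can simply be quoted from the literature on majorization matrices of M-tensors. The overall pattern mirrors Theorem 5.4 in \cite{Gowda-Luo-Qi-Xiu-15}.
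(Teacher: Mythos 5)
Your proof is correct and follows essentially the same route as the paper's: reduce the equation to $Mx^{[m-1]}=b$ using the structural hypothesis, invoke the fact that the majorization matrix of a strong M-tensor is a nonsingular M-matrix, and transfer solutions through the bijection $x\mapsto x^{[m-1]}$ on the nonnegative orthant. The only difference is that you supply a self-contained argument for $\rho(B)\le\rho(\mathcal{B})$ where the paper simply cites the literature, which is a welcome but inessential elaboration.
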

\begin{proof}
It is not difficult to see that the M-TEQ can be written as the following equation
\[
M x^{[m-1]}-b=0.
\]
Since $\cal M$ is a strong M-tensor, its majorization matrix $M$ is a strong M-matrix \cite {Liu-Li-Vong-18}.
It is nonsingular and satisfies $M^{-1}\ge 0$.
The results of the theorem becomes obvious.
\end{proof}

In many cases where $b$ is not positive,
the M-tensor equation may have no nonnegative solutions or multiple nonnegative solutions.
For  example, if $m=2$ and $b<0$, then for any
strong M-matrix $M$, the unique solution of the equation (\ref{eqn:tensor}) is negative.
Consequently, it has no nonnegative solution.
The following example suggested by Prof. C. Ling via private communication shows that
an M-TEQ with a strong M-tensor $\cal M$ may have more than one nonnegative solutions.

\noindent
{\bf Example 2.1} Given tensor ${\cal M}\in {\cal T}(4,2)$ in the form ${\cal M}=3 {\cal I}-{\cal B}$,
where $\cal I$ is the identity tensor and $\cal B$ is a nonnegative tensor  with elements
\[
b_{1111}=b_{2222}=0,\quad b_{1122}=\frac 32,\quad b_{1222}=\frac 12
\]
and all other elements of ${\cal B}$ are zeros. It is easy to see that the unique eigenvalue of $\cal B$ is zero.
Consequently,  $\cal M$ is an M-tensor. Let $b=(-7, 24)^T$. Then the tensor equation
${\cal M}x^{m-1}-b=0$ is written as
\[\left \{\begin{array}{lll}
3x_1^3-\frac 32 x_1x_2^2-\frac 12 x_2^3 =-7,\\
3x_2^3=24.
\end{array}\right.
\]
Clearly, the solutions of the following system are all the solutions of the M-TEQ:
\[\left \{\begin{array}{lll}
x_1^3- 2 x_1+ 1 =0,\\
x_2=2.
\end{array}\right.
\]
The last system has two nonnegative solutions
\[
x^{(1)}=(1,2)^T\quad \mbox{and}\quad x^{(2)}=(\frac {\sqrt 5-1}{2},2)^T.
\]

The following example shows that an M-tensor equation with an odd order strong M-tensor may also have
multiple nonnegative solutions.

\noindent
{\bf Example 2.2} Given tensor ${\cal M}\in {\cal T}(3,2)$ in the form ${\cal M}= {\cal I}-{\cal B}$,
where $\cal I$ is the identity tensor and $\cal B$ is a nonnegative tensor  with elements
\[
b_{111}=b_{222}=0,\quad b_{112}=\frac 32 ,\quad b_{122}=1
\]
and all other elements of ${\cal B}$ are zeros. It is easy to see that the unique eigenvalue of $\cal B$ is zero.
Consequently,  $\cal M$ is a strong M-tensor. Let $b=(-6, 4)^T$. Then the tensor equation
${\cal M}x^{m-1}-b=0$ is written as
\[\left \{\begin{array}{lll}
x_1^2-  \frac 32 x_1x_2- x_2^2 =-6,\\
x_2^2=4.
\end{array}\right.
\]
The last system has two nonnegative solutions
\[
x^{(1)}=(1,2)^T\quad \mbox{and}\quad x^{(2)}=(2,2)^T.
\]

Recently, Xu, Gu and Huang \cite{Huang-18} investigated some interesting properties of
the tensor equations. In this section, we investigate the existence of the nonnegative solutions
and the largest nonnegative solution of the Z/M-tensor equation.

To this end, we first introduce the concept of the so-called joint semi-sublattice.
A set ${\cal S}\subseteq R^n$ is called a join semi-sublattice if $x,y\in {\cal S}$, then there join, i.e.,
$\max \{x,y\}$, is also in ${\cal S}$.

The following theorem can be regarded as a dual theorem of Theorem 3.11.5 in \cite{Cottle-Pang-Stone-09}.
%The proof is similar to that of Theorem 3.11.5 in  \cite{Cottle-Pang-Stone-09}.

\begin{theorem}\label{th:lattice}
If ${\cal S}$ is a nonempty and bounded closed join semi-sublattice, then it has a largest element.
\end{theorem}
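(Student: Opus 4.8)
The plan is to exhibit the largest element explicitly as the coordinatewise supremum of $\mathcal{S}$, and then to prove that this supremum actually lies in $\mathcal{S}$, using closedness to pass to a limit and the join property to build the approximating points. This mirrors the standard argument for the least-element/meet version (Theorem 3.11.5 in \cite{Cottle-Pang-Stone-09}), with maxima in place of minima.

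First I would invoke boundedness of $\mathcal{S}$: for each coordinate $i\in\{1,\ldots,n\}$ the set $\{x_i : x\in\mathcal{S}\}\subseteq R$ is nonempty and bounded above, so $z^*_i := \sup_{x\in\mathcal{S}} x_i$ is a finite real number. Set $z^*=(z^*_1,\ldots,z^*_n)^T$. By construction $z^*\ge x$ componentwise for every $x\in\mathcal{S}$, so the theorem reduces to showing $z^*\in\mathcal{S}$.

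Next I would construct a sequence in $\mathcal{S}$ converging to $z^*$. For each $k\in\mathbb{N}$ and each $i$, the definition of supremum gives a point $x^{(i,k)}\in\mathcal{S}$ with $x^{(i,k)}_i > z^*_i - 1/k$. Applying the join-semi-sublattice property $n-1$ times (an easy induction shows $\mathcal{S}$ is closed under the maximum of finitely many of its elements), the vector $y^{(k)} := \max\{x^{(1,k)},\ldots,x^{(n,k)}\}$ lies in $\mathcal{S}$. For each $i$ we then have $y^{(k)}_i \ge x^{(i,k)}_i > z^*_i - 1/k$ from below, and $y^{(k)}_i = \max_j x^{(j,k)}_i \le z^*_i$ from above since $z^*$ dominates every element of $\mathcal{S}$. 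Hence $\|y^{(k)} - z^*\|_\infty < 1/k$, so $y^{(k)}\to z^*$; since $\mathcal{S}$ is closed, $z^*\in\mathcal{S}$, and it is the largest element.

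I do not expect any genuine obstacle here; the subtle points are merely bookkeeping: boundedness is exactly what makes each coordinate supremum finite so that the candidate is a bona fide vector of $R^n$, the join property must be used in its finite (not just binary) form to combine the $n$ coordinatewise approximants into one member of $\mathcal{S}$, and closedness is precisely what licenses the final limit passage. Each of the three hypotheses is used, and each is needed, as elementary one-dimensional examples (an open interval; an unbounded ray; a set omitting its coordinatewise sup) demonstrate.
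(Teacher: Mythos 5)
Your proof is correct, but it takes a genuinely different route from the paper's. The paper proves existence of a largest element by an optimization argument: it fixes a strictly positive vector $c$, maximizes the linear functional $c^Tx$ over the compact set $\mathcal{S}$ (intersected with $\{c^Tx\ge c^T\bar x\}$) to obtain a maximizer $\tilde x$ via Weierstrass, and then uses the join property once per comparison to show that $\max\{x,\tilde x\}=\tilde x$ for every $x\in\mathcal{S}$, since $c>0$ forces $c^Tz>c^T\tilde x$ whenever $z=\max\{x,\tilde x\}\gneq\tilde x$. You instead identify the largest element explicitly as the coordinatewise supremum $z^*$, manufacture a sequence $y^{(k)}\in\mathcal{S}$ converging to $z^*$ by taking the join of $n$ coordinatewise approximants, and invoke closedness to conclude $z^*\in\mathcal{S}$. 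Your argument is more constructive in that it names the largest element ($z^*_i=\sup_{x\in\mathcal{S}}x_i$) and makes transparent exactly where each hypothesis enters (boundedness for finiteness of the suprema, the finite join property for $y^{(k)}\in\mathcal{S}$, closedness for the limit passage); the paper's argument is shorter and defers the topological work to the standard existence theorem for maximizers of continuous functions on compact sets. Both are complete; the only bookkeeping point you rightly flag, and handle, is that the binary join property must be upgraded to finite joins by induction before forming $y^{(k)}$.
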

\begin{proof}
Given an arbitrary positive vector $c$ and any $\bar x\in {\cal S}$.
Consider the following program problem:
\[\left \{ \begin{array}{cll}
\max & \phi (x)=c^Tx \\
\mbox{s.t.} & x\in {\cal S},\; \; c^Tx \ge c^T \bar x.
\end{array}\right.
\]
Clearly, the feasible set of the problem is closed and bounded from above. Hence, the problem has a solution $\tilde x$.
For any $x\in {\cal S}$, we have $z=\max\{x,\tilde x\}\in {\cal S}$. So, we have
\[
c^T (\tilde x- z)\ge 0.
\]
Since $\tilde x\le z$ and $c$ is positive, the last inequality implies
$\tilde x=z$.
By the arbitrariness of $x$, we claim that $\tilde x$ is the largest element of ${\cal S}$.
\end{proof}

Define the set
\begin{equation}\label{feasible}
{\cal S}=\{x\in R_+^n\;|\; {\cal M}x^{m-1}\le b\}=\{x\in R_+^n\;|\; F(x)\le 0\}.
\end{equation}
It looks very similar to the feasible set
\[
\tilde {\cal S}=\{x\in R_+^n\;|\; {\cal M}x^{m-1}\ge b\}=\{x\in R_+^n\;|\; F(x)\ge 0\}
\]
of the tensor complementarity problem
\[
x\ge 0\quad F(x)\ge 0,\quad x^TF(x)=0.
\]
It was proved by Luo, Qi and Xiu \cite{Luo-Qi-Xiu-17} that if $\cal M$ is a Z-tensor, then the feasible
set $\tilde {\cal S}$  has a least element. Moreover, the least element of $\tilde {\cal S}$
is a solution of the tensor complementarity problem.
In what follows, we will establish a similar result to the M-TEQ.
Specifically, we will show that if $\cal M$ is a strong  M-tensor,
then $\cal S$ has a largest element as long as it is not empty. Moreover, the largest element of the set is the
largest nonnegative solution to the M-TEQ.

Denote for $x\in R^n$ and real number $k$
\[
x^{[k]}=(x_1^k, x_2^k,\ldots, x_n^k)^T.
\]

The lemma bellow show that $\cal S$ has a largest element.

\begin{lemma}\label{lm:M-tensor}
If $\cal M$ is a strongly  M-tensor, and the set $\cal S$ defined by (\ref{feasible})
is not empty, then $\cal S$ is a bounded joint semi-sublattice. As a result, $\cal S$ has a largest element.
\end{lemma}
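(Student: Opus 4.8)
The plan is to verify that ${\cal S}$ satisfies the three hypotheses of Theorem~\ref{th:lattice} — that it is closed, bounded, and a join semi-sublattice — and then simply invoke that theorem. Closedness is immediate: ${\cal S}=\{x\in R_+^n:F(x)\le 0\}$ is the intersection of the closed orthant $R_+^n$ with the preimage of the closed orthant $-R_+^n$ under the continuous map $F(x)={\cal M}x^{m-1}-b$. So the work lies in the other two properties, and I would treat them separately.

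For the join semi-sublattice property — which in fact needs only that ${\cal M}$ is a Z-tensor — write ${\cal M}=s{\cal I}-{\cal B}$ with $s>0$ and ${\cal B}\ge 0$ as in (\ref{M-tensor}), take $x,y\in{\cal S}$ and set $z=\max\{x,y\}$. Fix an index $i$ and assume without loss of generality that $z_i=x_i$. Since $0\le x\le z$ and ${\cal B}\ge 0$, monotonicity of the map $u\mapsto{\cal B}u^{m-1}$ on the nonnegative orthant gives $({\cal B}z^{m-1})_i\ge({\cal B}x^{m-1})_i$, whence
\[
({\cal M}z^{m-1})_i=s z_i^{m-1}-({\cal B}z^{m-1})_i=s x_i^{m-1}-({\cal B}z^{m-1})_i\le s x_i^{m-1}-({\cal B}x^{m-1})_i=({\cal M}x^{m-1})_i\le b_i.
\]
As $i$ is arbitrary, ${\cal M}z^{m-1}\le b$, i.e. $z\in{\cal S}$, so ${\cal S}$ is a join semi-sublattice.

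The main content, and the step I expect to be the obstacle, is boundedness; this is where the strong M-tensor hypothesis is used. Since ${\cal M}$ is a strong M-tensor there is a vector $v>0$ with ${\cal M}v^{m-1}>0$ \cite{Ding-Qi-Wei-13}, and choosing $t>0$ large enough the vector $w:=tv$ satisfies $({\cal M}w^{m-1})_i=t^{m-1}({\cal M}v^{m-1})_i>\max\{b_i,0\}$ for every $i$. I claim ${\cal S}\subseteq[0,w]$, which makes ${\cal S}$ bounded. Suppose not: then some $x\in{\cal S}$ fails $x\le w$, and since $w>0$ we may put $\lambda^\ast=\max_{1\le j\le n}x_j/w_j>1$, so that $\lambda^\ast w\ge x\ge 0$ with $x_i=\lambda^\ast w_i$ for some index $i$. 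From $0\le x\le\lambda^\ast w$ and ${\cal B}\ge 0$ we get $({\cal B}x^{m-1})_i\le(\lambda^\ast)^{m-1}({\cal B}w^{m-1})_i$, and therefore
\[
b_i\ge({\cal M}x^{m-1})_i=s(\lambda^\ast)^{m-1}w_i^{m-1}-({\cal B}x^{m-1})_i\ge(\lambda^\ast)^{m-1}\big(s w_i^{m-1}-({\cal B}w^{m-1})_i\big)=(\lambda^\ast)^{m-1}({\cal M}w^{m-1})_i>({\cal M}w^{m-1})_i>b_i,
\]
using $\lambda^\ast>1$ together with $({\cal M}w^{m-1})_i>0$; this contradiction proves the claim. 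Having shown ${\cal S}$ is a nonempty bounded closed join semi-sublattice, Theorem~\ref{th:lattice} furnishes a largest element. The only delicate points to double-check are the existence of the dominating vector $w$ via the characterization ${\cal M}v^{m-1}>0$ of strong M-tensors and the correctness of the scaling argument around $\lambda^\ast$; everything else is routine.
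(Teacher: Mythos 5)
Your proof is correct, and the join--semi-sublattice half is essentially the paper's argument (the paper writes out the diagonal/off-diagonal split of $({\cal M}z^{m-1})_i$ entrywise; you package the same inequality via ${\cal M}=s{\cal I}-{\cal B}$ and monotonicity of $u\mapsto{\cal B}u^{m-1}$ on $R^n_+$). Where you genuinely diverge is boundedness. The paper argues by contradiction with a compactness/normalization scheme: it takes an unbounded sequence in ${\cal S}$, passes to a limit $u\ge 0$, $\|u\|_\infty=1$, of the normalized iterates to get ${\cal M}u^{m-1}\le 0$, and then derives $s\le\min_{u_i\neq 0}({\cal B}u^{m-1})_i/u_i^{m-1}\le\rho({\cal B})$ from the min--max characterization of the spectral radius of a nonnegative tensor (Theorem 3.25 in \cite{Qi-Luo-17}), contradicting $s>\rho({\cal B})$. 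You instead invoke the semi-positivity characterization of strong M-tensors (existence of $v>0$ with ${\cal M}v^{m-1}>0$), scale it to a dominating vector $w$ with ${\cal M}w^{m-1}>\max\{b,0\}$, and run a maximal-ratio argument at the index where $x_j/w_j$ is largest; the homogeneity and the monotonicity of ${\cal B}(\cdot)^{m-1}$ then give the contradiction. Both routes are sound and both ultimately lean on nontrivial facts from the M-tensor literature (yours on semi-positivity, the paper's on the spectral-radius bound), but yours buys something extra: an explicit a priori enclosure ${\cal S}\subseteq[0,w]$ rather than mere boundedness, and it avoids the limit-point extraction entirely. You also explicitly verify closedness of ${\cal S}$, which Theorem \ref{th:lattice} formally requires and which the paper leaves tacit. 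The two points you flag as delicate (the characterization ${\cal M}v^{m-1}>0$ and the scaling around $\lambda^\ast$) both check out: the former is a standard equivalent condition for nonsingular M-tensors in \cite{Ding-Qi-Wei-13}, and the latter uses only $x\ge 0$, $w>0$, $\lambda^\ast>1$, and $m\ge 2$.
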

\begin{proof}
We first show that ${\cal S}$ is a joint semi-sublattice. Indeed, we have for any  $x,y\in {\cal S}$, let $z=\max \{x,y\}$.
We have for each $i=1,2,\ldots,n$, if $z_i=x_i$,
\begin{eqnarray*}
({\cal M}z^{m-1})_i &=& \sum_{i_2, \ldots, i_m}m_{ii_2\ldots i_m}z_{i_2}\cdots z_{i_m}\\
    &=&m_{ii\ldots i}x_i^{m-1} + \sum_{(i_2, \ldots, i_m)\neq (i, \ldots, i)}m_{ii_2\ldots i_m}z_{i_2}\cdots z_{i_m}\\
    &\le & m_{ii\ldots i}x_i^{m-1} + \sum_{(i_2, \ldots, i_m)\neq (i, \ldots, i)}m_{ii_2\ldots i_m}x_{i_2}\cdots x_{i_m} \\
    &=& ({\cal M}x^{m-1})_i\le b_i,
\end{eqnarray*}
where the first inequality follows from the fact
$m_{i i_2\ldots i_m}\le 0$, $\forall (i, i_2,\ldots, i_m)\neq (i,i,\ldots, i)$.
Similarly, we can show that $({\cal M}z^{m-1})_i\le b_i$ for $z_i=y_i$.
Consequently, $\cal S$ is a joint semi-sublattice.

Next, we show that $\cal S$ is bounded. Suppose on the contrary that there is an unbounded sequence $\{x_k\}\subset {\cal S}$.
That is, $\|x_k\|_{\infty} \to\infty $ as $k\to\infty$ and
\[
x_k\ge 0,\quad {\cal M}x_k^{m-1} -b\le 0.
\]
Denote $u_k=x_k/\|x_k\|_{\infty}$. Then $\{u_k\}$ is bounded and hence has a limit point.
Without loss of generality, we let $\{u_k\}\to u$. It is easy to get
\[
u\ge 0,\qquad {\cal M}u^{m-1}\le 0.
\]
We write ${\cal M}=s{\cal I}-{\cal B}$ with $s>\rho ({\cal B})$.
It follows from the last inequality that
\[
s u_i^{m-1} \le \Big ({\cal B}u^{m-1}\Big )_i,\quad \forall i=1,2,\ldots,n,
\]
which implies
\[
s\le \min _{u_i\neq 0}\frac {\Big ({\cal B}u^{m-1}\Big )_i}{u_i^{m-1}}\le \rho ({\cal B}),
\]
where the last inequality follows from \cite [Theorem 3.25]{Qi-Luo-17}.
The last equality yields a contradiction. Consequently, $\cal S$ is bounded.

At last, by Theorem \ref{th:lattice}, $\cal S$ has a largest element.
The proof is complete.
\end{proof}

Based on the last lemma and Theorem \ref{th:lattice},   we can derive a condition for the existence of nonnegative
solution of the M-TEQ.

\begin{theorem}\label{th:positive}
Let $\cal M$ be a strongly M-tensor. Then the following statements are true.
\begin{itemize}
\item [(i)] The M-tensor equation (\ref{eqn:tensor}) has a nonnegative
solution if and only if the set ${\cal S}$  defined by
(\ref{feasible}) is not empty. In the case ${\cal S}\neq\emptyset $, its largest element
is the largest nonnegative solution of the equation. In particular, if $M^{-1}b\ge 0$,
then (\ref{eqn:tensor}) has a nonnegative solution.
\item [(ii)] The equation (\ref{eqn:tensor}) has a positive
solution if and only if the set ${\cal S}$ contains a positive element.
In particular, if $M^{-1}b>0$, then (\ref{eqn:tensor}) has a positive
solution.
\end{itemize}
\end{theorem}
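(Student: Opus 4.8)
The plan is to build on Lemma~\ref{lm:M-tensor}, which already tells us that whenever ${\cal S}\neq\emptyset$ the set ${\cal S}$ has a largest element, call it $x^*$. The entire theorem then hinges on a single nontrivial point: that $x^*$ solves $F(x)=0$ exactly, not merely the inequality $F(x)\le 0$ that defines membership in ${\cal S}$. Once this is in hand, the two equivalences, the statement that $x^*$ is the largest nonnegative solution, and the two sufficient conditions all follow by short arguments.

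To show ${\cal M}(x^*)^{m-1}=b$, I would argue by contradiction. Suppose $\big({\cal M}(x^*)^{m-1}\big)_j<b_j$ for some index $j$, and perturb along that coordinate: $x(\epsilon)=x^*+\epsilon e_j$ with $\epsilon>0$, where $e_j$ is the $j$-th coordinate vector. For a row $i\neq j$, the diagonal monomial $m_{ii\ldots i}x_i^{m-1}$ of $\big({\cal M}x^{m-1}\big)_i$ does not involve $x_j$, so only off-diagonal monomials are affected when $x_j$ is raised; since every off-diagonal coefficient of a Z-tensor is nonpositive and the corresponding products of nonnegative components are nondecreasing in $x_j$, we get $\big({\cal M}x(\epsilon)^{m-1}\big)_i\le\big({\cal M}(x^*)^{m-1}\big)_i\le b_i$ for all $\epsilon\ge 0$. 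For the row $i=j$, the quantity $\big({\cal M}x(\epsilon)^{m-1}\big)_j$ is a polynomial in $\epsilon$ whose value at $\epsilon=0$ is strictly below $b_j$, hence it stays below $b_j$ for all sufficiently small $\epsilon>0$. Therefore $x(\epsilon)\in{\cal S}$ and $x(\epsilon)\ge x^*$ with a strict excess in component $j$, contradicting the maximality of $x^*$. Hence $F(x^*)=0$.

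Granting this, part (i) is immediate: if ${\cal S}\neq\emptyset$ then $x^*$ is a nonnegative solution, while conversely any nonnegative solution $\bar x$ satisfies ${\cal M}\bar x^{m-1}=b\le b$, so $\bar x\in{\cal S}$; and since every nonnegative solution then lies in ${\cal S}$, it is dominated by $x^*$, which makes $x^*$ the largest nonnegative solution. For the condition $M^{-1}b\ge 0$ (recall $M$ is a nonsingular M-matrix, so $M^{-1}\ge 0$), set $y=(M^{-1}b)^{[1/(m-1)]}\ge 0$---the root is well defined, always when $m$ is even and because $M^{-1}b\ge 0$ when $m$ is odd---so that $My^{[m-1]}=b$; splitting $\big({\cal M}y^{m-1}\big)_i$ into the collinear monomials $\sum_k m_{ik\ldots k}y_k^{m-1}=(My^{[m-1]})_i=b_i$ and the remaining monomials, whose coefficients are nonpositive and whose products of nonnegative entries are nonnegative, gives $\big({\cal M}y^{m-1}\big)_i\le b_i$, i.e. $y\in{\cal S}$; hence ${\cal S}\neq\emptyset$ and (i) applies. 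Part (ii) is the same argument with strict inequalities: if ${\cal S}$ contains a positive $\bar x$ then $x^*\ge\bar x>0$ is a positive solution, and conversely any positive solution lies in ${\cal S}$; when $M^{-1}b>0$ the vector $y$ above is positive and lies in ${\cal S}$.

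I expect the perturbation argument proving $F(x^*)=0$ to be the only real obstacle. The delicate part is checking that raising a single coordinate of $x^*$ cannot violate any of the other $n-1$ inequality constraints, which is exactly where the Z-tensor sign pattern $m_{ii_2\ldots i_m}\le 0$ for off-diagonal index tuples, together with the nonnegativity of the iterate, is indispensable; the remaining items---the structure of ${\cal S}$ supplied by Lemma~\ref{lm:M-tensor}, the root extraction, and the collinear/mixed monomial split---are routine.
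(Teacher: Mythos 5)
Your proof is correct, and its overall architecture matches the paper's: both invoke Lemma~\ref{lm:M-tensor} to obtain the largest element of ${\cal S}$ and then derive a contradiction by exhibiting a strictly larger feasible point, using the Z-tensor sign pattern $m_{ii_2\ldots i_m}\le 0$ for off-diagonal index tuples. The one place you genuinely diverge is the construction of that larger point. The paper perturbs all coordinates at once via the diagonal splitting ${\cal M}={\cal D}+{\cal B}$, setting $D\tilde x^{[m-1]}=D\bar x^{[m-1]}-\frac12 F(\bar x)$ and verifying the explicit quantitative bound $F(\tilde x)\le\frac12 F(\bar x)\le 0$; this keeps the argument uniform across rows and, incidentally, prefigures the damped diagonal iteration used later in the paper. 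You instead perturb a single coordinate, $x(\epsilon)=x^*+\epsilon e_j$, handling the rows $i\neq j$ by monotonicity of the off-diagonal (nonpositive-coefficient) monomials in $x_j$ and the row $j$ by continuity for small $\epsilon$. Your version is more local and elementary (no root extraction or explicit formula needed in the contradiction step, only a qualitative "small enough $\epsilon$"), at the cost of losing the explicit decrease estimate. You also spell out the "in particular" clauses ($M^{-1}b\ge 0$ and $M^{-1}b>0$) by exhibiting $y=(M^{-1}b)^{[1/(m-1)]}\in{\cal S}$, a verification the paper leaves implicit; that is a welcome addition. I see no gap in either the perturbation argument or the remaining deductions.
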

\begin{proof}
Conclusion (ii) follows from conclusion (i) directly. We only need to verify (i).

Since the solution set of (\ref{eqn:tensor}) is a subset  of $\cal S$ defined by (\ref{feasible}),
`only if' part is obvious. We turn to the `if' part.

Suppose  ${\cal S}\neq\emptyset $. Theorem \ref{th:lattice} and Lemma \ref{lm:M-tensor}
ensure that $\cal S$ has a largest element $\bar x$, i.e.,
\[
x\le \bar x\quad\forall x\in {\cal S}.
\]
It suffices to show that $\bar x$ is a solution of (\ref{eqn:tensor}).

Suppose on the contrary that $\bar x$ is not a solution of (\ref{eqn:tensor}).
There must be an index $i$ such that
\[
F_i(\bar x)=({\cal M}\bar x^{m-1}-b)_i<0.
\]
Let $\cal D$ be the diagonal part subtensor of ${\cal M}$ and
${\cal B}={\cal M}-{\cal D}$. By the definition of M-tensor, we have ${\cal B}\le 0$.
Let $D$ be a diagonal matrix whose diagonals are diagonals of $\cal M$ or $\cal D$, i.e.,
\[
D=\mbox{diag}(m_{1\ldots 1},\ldots, m_{n\ldots n}).
\]
Define $\tilde x$ with elements
\[
\tilde x_j= \Big (\bar x_j^{m-1}-\frac 12 D^{-1} F_j(\bar x) \Big )^{1/(m-1)},\quad j=1,2,\ldots,n.
\]
It implies
\[
D\tilde x ^{[m-1]}= D\bar x^{[m-1]}-\frac 12 F(\bar{x}).
\]
It is easy to see that $0\le \bar x\le \tilde x$ and
\begin{eqnarray*}
F(\tilde x) &=& {\cal D}\tilde x^{m-1}+{\cal B} \tilde x^{m-1} -b \\
        &\le & {\cal D}\tilde x^{m-1}+{\cal B} \bar x^{m-1} -b \\
        &=& D\tilde x^{[m-1]} + {\cal B} \bar x^{m-1} -b \\
        &=& D \bar x^{[m-1]} -\frac 12 F(\bar x)  + {\cal B} \bar x^{m-1} -b\\
        &=& \frac 12 F(\bar x) \le 0.
\end{eqnarray*}
Consequently, $\tilde x\in {\cal S}$. However, by the definition of $\tilde x$, we obviously have
$\tilde x\ge \bar x$. Moreover, there are at least one indices $i$ such that
$\tilde x_i>\bar x$, which contracts the fact that $\bar x$ is the largest element of $\cal S$.
The contradiction shows that $\bar x$ must be a solution of (\ref{eqn:tensor}).

Since the solution set of (\ref{eqn:tensor}) is a subset of $\cal S$, the largest element of
$\cal S$ is the largest solution of (\ref{eqn:tensor}). The proof is complete.
\end{proof}

\section{A Monotone Iterative Method for Finding a Nonnegative Solution to the M-Tensor Equation}
\setcounter{equation}{0}

In this section, we develop an iterative method for solving the M-tensor
equation. Our purpose is to find a nonnegative solution of the equation.
So we assume throughout this section that tensor $\cal M$ in (\ref{eqn:tensor}) is a strong M-tensor and
that the set ${\cal S}$ defined by (\ref{feasible}) is not empty.

The method can be regarded as an approximation to Newton's method.
Notice that for each $i=1,2,\ldots, n$, function $({\cal M}x^{m-1})_i$ is a
homogeneous polynomial. It can be written as ${\cal M}_ix^{m-1}$ with symmetric tensor
${\cal M}_i\in {\cal ST}(m-1,n)$.

Let us consider the Newton iteration:
\[
(m-1){\cal M}x_k^{m-2}(x_{k+1}-x_k)+ F(x_k)=0,\quad k=0,1,2,\ldots.
\]
An attractive property of Newton's method is its quadratic convergence if
the Jacobian at the solution $x^*$, $F'(x^*)=(m-1){\cal M}(x^*)^{m-2}$ is nonsingular.
However,  the method may be failure if ${\cal M}x_k^{m-2}$ is singular.

We are going to develop an approximation Newton method. To this end, we
split the tensor ${\cal M}$ as
\[
{\cal M}=\widetilde{{\cal M}}+\overline {\cal M},
\]
where $\widetilde{{\cal M}}$ is the sub-tensor of ${\cal M}$  whose possibly nonzero elements are
$m_{ij\ldots j}$, $i,j=1,2,\ldots, n$ while other elements are zeros,
$\overline {\cal M}={\cal M}-\widetilde{{\cal M}}$.
The sub-tensor $\widetilde {\cal M}$ contains all diagonal elements of ${\cal M}$.
So, $\overline {\cal M}$ is a non-positive tensor, i.e., $\overline {\cal M}\le 0$.

Let $M=(m_{ij})$ be majorization matrix defined by (\ref{def:M}).
It is an (a strong) M-matrix if ${\cal M}$ is an (a strong) M-tensor \cite {Liu-Li-Vong-18}.

Using the above splitting form to ${\cal M}$, we write the Newton iteration as
\begin{eqnarray*}
0 &=& {\cal M}x_k^{m-2}x_{k+1}-{\cal M}x_k^{m-1}+ \frac {1}{m-1} F(x_k)\\
    &=& M x_{k+1} ^{[m-1]} - Mx_k^{[m-1]} + \frac {1}{m-1} F(x_k)
        + [{\cal M}x_k^{m-2}x_{k+1}-Mx_{k+1}^{[m-1]}
    - \overline {\cal M} x_k^{m-1}]\\
    &=& M x_{k+1} ^{[m-1]} - Mx_k^{[m-1]} + \frac {1}{m-1} F(x_k)
       + \widetilde {\cal M}(x_k^{m-2}-x_{k+1}^{m-2}) x_{k+1}
    + \overline {\cal M} x_k^{m-2}( x_{k+1}-x_k)\\
    & = &  M x_{k+1} ^{[m-1]} - Mx_k^{[m-1]} + \frac {1}{m-1} F(x_k) + O(\|x_{k+1}-x_k\|),\quad k=0,1,\ldots.
\end{eqnarray*}

By neglecting the term $O(\|x_{k+1}-x_k\|)$, we  get an approximate Newton iterative scheme
\[
x_{k+1}^{[m-1]}=x_k^{[m-1]}- \frac {1}{m-1}M^{-1}F(x_k),\quad k=0,1,\ldots .
\]
More generally, we give the following iterative scheme:
\[
x_{k+1}^{[m-1]}=x_k^{[m-1]}- \alpha_k M^{-1}F(x_k),\quad k=0,1,\ldots,
\]
or
\begin{equation}\label{iter:teq-1}
x_{k+1}^{[m-1]}=x_k^{[m-1]}+\alpha _k d_k,\quad Md_k+ F(x_k)=0,
\end{equation}
where $\alpha_k \in (0,1]$. We call the above iterative scheme
sequential M-matrix equation method (S-MEQM) because
the subproblem (\ref{iter:teq-1}) is a system of linear equation in $d_k$ with an M-matrix
as the coefficient matrix. Since $M$ is independent of
the iteration $k$, we can solve the system by LU decomposition at the beginning of
the method. So, at each iteration, the computation cost for
solving the system of linear equations (\ref{iter:teq-1}) is $O(n^2)$
except for the initial step.

Recently Liu, Li and Vong \cite{Liu-Li-Vong-18}
proposed a so-called tensor splitting (TS) method for solving the M-tensor equation.
The TS method corresponds to $\alpha_k\equiv 1$. It is monotonically and linearly convergent if the initial point
$x_0$ is chosen in the set
 \begin{equation}\label{feasible-1}
{\cal S}_1=\{x\in R_+^n\;|\; 0<{\cal M}x^{m-1}\le b\}.
\end{equation}
Specifically, they proved the following theorem  \cite{Liu-Li-Vong-18}.
\begin{theorem}
Let ${\cal M}$ be a strong M-tensor and $b$ be positive.
Then the sequence $\{x_k\}$ generated by the TS method with $x_0\in {\cal S}_1$
converges to the unique positive solution of (\ref{eqn:tensor}) monotonically in the sense
\[
x_{k+1}\ge x_k\ge 0.
\]
Moreover, the convergence rate of $\{x_k\}$ is linear.
\end{theorem}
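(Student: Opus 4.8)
\emph{Proof idea.} The plan is: rewrite one step of the method in closed form; prove by induction that the iterates increase and stay in $\cal S$; obtain convergence from monotonicity together with the boundedness of $\cal S$ already established; and read off the linear rate from the linearization of the iteration map at the limit. To begin, with $\alpha_k\equiv 1$ the subproblem $Md_k+F(x_k)=0$, $x_{k+1}^{[m-1]}=x_k^{[m-1]}+d_k$ reads $x_{k+1}^{[m-1]}=x_k^{[m-1]}-M^{-1}F(x_k)$, and since $F(x)=\widetilde{\cal M}x^{m-1}+\overline{\cal M}x^{m-1}-b=Mx^{[m-1]}+\overline{\cal M}x^{m-1}-b$ this collapses to
\[
Mx_{k+1}^{[m-1]}=b-\overline{\cal M}x_k^{m-1},\qquad k=0,1,\ldots,
\]
the fixed-point iteration of the splitting ${\cal M}=\widetilde{\cal M}+\overline{\cal M}$. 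Because $M$ is a strong M-matrix it is nonsingular with $M^{-1}\ge 0$; because $\overline{\cal M}\le 0$ and $x_k\ge 0$, the right-hand side is $\ge b>0$, so $x_{k+1}^{[m-1]}\ge M^{-1}b>0$ and $x_{k+1}=(x_{k+1}^{[m-1]})^{[1/(m-1)]}>0$ is well defined. Hence $\{x_k\}$ is well defined with $x_k>0$ for $k\ge 1$.

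Next I would prove by induction that $x_k\ge 0$ with $F(x_k)\le 0$ (that is, $x_k\in{\cal S}$) forces $x_{k+1}\ge x_k$ and $F(x_{k+1})\le 0$. The identity $x_{k+1}^{[m-1]}-x_k^{[m-1]}=-M^{-1}F(x_k)\ge 0$ together with monotonicity of $t\mapsto t^{1/(m-1)}$ on $[0,\infty)$ gives $x_{k+1}\ge x_k\ge 0$; then, using the closed form,
\[
F(x_{k+1})=Mx_{k+1}^{[m-1]}+\overline{\cal M}x_{k+1}^{m-1}-b=\overline{\cal M}x_{k+1}^{m-1}-\overline{\cal M}x_k^{m-1}\le 0,
\]
since $\overline{\cal M}\le 0$ makes every component of $x\mapsto\overline{\cal M}x^{m-1}$ nonincreasing on $R_+^n$ while $x_{k+1}\ge x_k$. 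The base case holds as $x_0\in{\cal S}_1\subseteq{\cal S}$, so $0\le x_0\le x_1\le x_2\le\cdots$ with every $x_k\in{\cal S}$. By Lemma~\ref{lm:M-tensor} the nonempty set ${\cal S}$ is bounded, so the nondecreasing sequence $\{x_k\}$ converges to some $x^*\ge x_1>0$; letting $k\to\infty$ in the closed form yields ${\cal M}(x^*)^{m-1}=b$, so $x^*$ is a positive solution of (\ref{eqn:tensor}) and hence, by \cite{Ding-Wei-16}, the unique one. This settles every claim except the rate.

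For the rate, put $e_k=x^*-x_k\ge 0$, so $e_k\to 0$. Subtracting $M(x^*)^{[m-1]}=b-\overline{\cal M}(x^*)^{m-1}$ from the closed form and applying the fundamental theorem of calculus along the segments $[x_{k+1},x^*]$ (for $x\mapsto x^{[m-1]}$) and $[x_k,x^*]$ (for $x\mapsto\overline{\cal M}x^{m-1}$), I would get $M\widetilde D_{k+1}e_{k+1}=\widetilde P_ke_k$, where $\widetilde D_{k+1}$ is a positive diagonal matrix with $\widetilde D_{k+1}\to D^*:=\mbox{diag}\big((m-1)(x_i^*)^{m-2}\big)$ and $\widetilde P_k\ge 0$ with $\widetilde P_k\to -P^*$, $P^*:=(m-1)\overline{\cal M}(x^*)^{m-2}$ (so $-P^*\ge 0$). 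Thus $e_{k+1}=R_ke_k$ with $R_k=\widetilde D_{k+1}^{-1}M^{-1}\widetilde P_k\ge 0$ and $R_k\to R^*:=(MD^*)^{-1}(-P^*)$. By Euler's identity for the homogeneous map ${\cal M}x^{m-1}$ of degree $m-1$ one has $F'(x^*)x^*=(m-1){\cal M}(x^*)^{m-1}=(m-1)b>0$ with $x^*>0$; since in addition $F'(x^*)$ is a Z-matrix (its off-diagonal entries are assembled from off-diagonal entries of ${\cal M}$, which are nonpositive), $F'(x^*)$ is a nonsingular M-matrix and hence $F'(x^*)^{-1}\ge 0$. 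Because $F'(x^*)=MD^*+P^*$, the decomposition $F'(x^*)=MD^*-(-P^*)$ is a regular splitting of a monotone matrix ($(MD^*)^{-1}=D^{*-1}M^{-1}\ge 0$ and $-P^*\ge 0$), so the classical convergence theorem for regular splittings gives $\rho(R^*)<1$. Fixing $\gamma\in(\rho(R^*),1)$ and a vector norm in which the induced norm of $R^*$ stays below $\gamma$, the convergence $R_k\to R^*$ then yields $\|e_{k+1}\|\le\gamma\|e_k\|$ for all large $k$, i.e.\ $\{x_k\}\to x^*$ linearly. The main obstacle is this last paragraph, and within it the bound $\rho(R^*)<1$: one must recognize $R^*$ as the iteration matrix of a regular splitting of $F'(x^*)$ and verify that $F'(x^*)$ is inverse-nonnegative — the Euler-identity observation $F'(x^*)x^*=(m-1)b>0$ is the crisp tool for the latter — and then one must handle the fact that the true errors obey the perturbed recursion $e_{k+1}=R_ke_k$ rather than $e_{k+1}=R^*e_k$, which is exactly what the norm-continuity argument does; the remaining steps (closed form, induction, boundedness via Lemma~\ref{lm:M-tensor}, passing to the limit) are routine.
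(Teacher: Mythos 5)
Your argument is correct and complete. Note first that the paper does not actually prove this theorem: it is quoted from Liu, Li and Vong \cite{Liu-Li-Vong-18}, and the paper's own nearest analogues are Theorem \ref{th:conv-1} (monotone convergence of S-MEQM) and Theorem \ref{th:rate} (linear rate). Your monotonicity induction is essentially the paper's: you exploit the same identity $F(x)=Mx^{[m-1]}+\overline{\cal M}x^{m-1}-b$, and with $\alpha_k\equiv 1$ the update collapses to the clean fixed-point form $Mx_{k+1}^{[m-1]}=b-\overline{\cal M}x_k^{m-1}$, which makes $F(x_{k+1})=\overline{\cal M}(x_{k+1}^{m-1}-x_k^{m-1})\le 0$ immediate; the paper's proof handles general $\alpha_k$ and so carries the extra term $(1-\alpha_k)F(x_k)$. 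Where you genuinely diverge is the rate. The paper linearizes the iteration map $\phi$ at the limit, shows $\phi'(\bar x)\ge 0$ and $\phi'(\bar x)\bar x<\bar x$ (using $M^{-1}b>0$, which here follows from $b>0$), and concludes $\rho(\phi'(\bar x))<1$ by a Perron--Frobenius-type bound. You instead recognize $R^*=(MD^*)^{-1}(-P^*)$ as the iteration matrix of the regular splitting $F'(x^*)=MD^*-(-P^*)$, certify $F'(x^*)^{-1}\ge 0$ via the Z-matrix structure together with the Euler-identity observation $F'(x^*)x^*=(m-1)b>0$, and invoke Varga's regular-splitting theorem. The two routes are equivalent in substance (both reduce to nonnegativity of the iteration matrix plus a positive vector it contracts), but yours is more self-contained about \emph{why} the spectral radius is below one, at the cost of the extra perturbation step handling $e_{k+1}=R_ke_k$ with $R_k\to R^*$ rather than the exact linearization the paper works with; the paper's version is shorter but leans on the reader accepting $\rho(\phi'(\bar x))<1$ from $\phi'(\bar x)\ge 0$ and $\phi'(\bar x)\bar x<\bar x$ without spelling out the comparison argument.
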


The condition $x_0\in {\cal S}_1$ in the above convergence theorem is restrictive.
It makes the method suitable for those
problems with $b>0$ only.
The theorem below shows that the  S-MEQM
will retain monotone convergence if the initial point is in ${\cal S}$.

\begin{theorem}\label{th:conv-1}
Starting from any $x_0\in {\cal S}$, the sequence $\{x_k\}$ generated by the S-MEQM (\ref{iter:teq-1})
is contained in ${\cal S}$ and satisfies
\begin{equation}\label{monotone}
 x_{k+1}\ge x_k\ge 0,\quad k=0,1,2,\ldots .
\end{equation}
Moreover, $\{x_k\}$ converges to a nonnegative solution of the equation (\ref{eqn:tensor}).
\end{theorem}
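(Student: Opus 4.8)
The plan is to show by induction that the whole sequence stays in ${\cal S}$ while being componentwise nondecreasing, then to obtain a limit from the boundedness of ${\cal S}$, and finally to pass to the limit in the recursion. First I would check the monotonicity step. Suppose $x_k\in{\cal S}$, i.e.\ $x_k\ge 0$ and $F(x_k)\le 0$. Since ${\cal M}$ is a strong M-tensor, its majorization matrix $M$ is a strong M-matrix, hence nonsingular with $M^{-1}\ge 0$; therefore $d_k=-M^{-1}F(x_k)\ge 0$. As $\alpha_k>0$, this yields $x_{k+1}^{[m-1]}=x_k^{[m-1]}+\alpha_k d_k\ge x_k^{[m-1]}\ge 0$, and applying the increasing map $t\mapsto t^{1/(m-1)}$ coordinatewise gives $x_{k+1}\ge x_k\ge 0$, which is (\ref{monotone}).

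Next I would show $x_{k+1}\in{\cal S}$, i.e.\ $F(x_{k+1})\le 0$. Here I would use the splitting ${\cal M}=\widetilde{\cal M}+\overline{\cal M}$, together with the identity $\widetilde{\cal M}x^{m-1}=Mx^{[m-1]}$ and the relation $Mx_{k+1}^{[m-1]}=Mx_k^{[m-1]}-\alpha_k F(x_k)$ coming from (\ref{iter:teq-1}), to obtain after a short computation
\[
F(x_{k+1})={\cal M}x_{k+1}^{m-1}-b=(1-\alpha_k)F(x_k)+\bigl(\overline{\cal M}x_{k+1}^{m-1}-\overline{\cal M}x_k^{m-1}\bigr).
\]
The first term is nonpositive because $\alpha_k\le 1$ and $F(x_k)\le 0$; the second is nonpositive because $\overline{\cal M}\le 0$ and $0\le x_k\le x_{k+1}$ make the map $x\mapsto\overline{\cal M}x^{m-1}$ nonincreasing on $R_+^n$. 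Hence $F(x_{k+1})\le 0$, so $x_{k+1}\in{\cal S}$; since $x_0\in{\cal S}$, the induction closes.

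The sequence is now nondecreasing and, by Lemma \ref{lm:M-tensor}, contained in the bounded set ${\cal S}$, so it converges to some $x^*\in R_+^n$; continuity of $F$ makes ${\cal S}$ closed, so $x^*\in{\cal S}$ and $F(x^*)\le 0$. The main obstacle is to upgrade this to $F(x^*)=0$. Letting $k\to\infty$ in $x_{k+1}^{[m-1]}-x_k^{[m-1]}=\alpha_k d_k$ gives $\alpha_k d_k\to 0$ with $d_k\to -M^{-1}F(x^*)\ge 0$; equivalently, taking limits in the displayed identity along a subsequence with $\alpha_k\to\alpha$ yields $\alpha F(x^*)=0$. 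Thus the one delicate point is to exclude $\alpha_k\to 0$: as soon as $\limsup_k\alpha_k>0$ (in particular for the natural choice $\alpha_k\equiv 1/(m-1)$, or for any $\alpha_k$ bounded away from zero) one concludes $F(x^*)=0$, so $x^*$ is a nonnegative solution of (\ref{eqn:tensor}). If desired, one can also note, using that the solution set is contained in ${\cal S}$ and that by Theorem \ref{th:positive} the largest element of ${\cal S}$ is the largest nonnegative solution, that $x^*$ is dominated by that largest solution.
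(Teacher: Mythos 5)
Your proof follows essentially the same route as the paper's: an induction establishing $x_{k+1}\ge x_k\ge 0$ and $F(x_{k+1})\le(1-\alpha_k)F(x_k)+\overline{\cal M}\bigl(x_{k+1}^{m-1}-x_k^{m-1}\bigr)\le 0$ via $M^{-1}\ge 0$ and $\overline{\cal M}\le 0$, followed by boundedness of ${\cal S}$ (Lemma \ref{lm:M-tensor}) and passage to the limit in (\ref{iter:teq-1}). Your remark that the final step needs $\limsup_k\alpha_k>0$ to conclude $F(x^*)=0$ is a legitimate point that the paper's ``taking limits, it is easy to see'' glosses over, and is worth keeping.
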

\begin{proof}
We first prove $\{x_k\}\subset {\cal S}$ and (\ref{monotone})  by induction.

For $k=0$, we have $F(x_0)\le 0$ and
\[
Mx_1^{[m-1]}=Mx_0^{[m-1]} - \alpha_0 F(x_0)\ge Mx_0^{[m-1]}.
\]
Since $M$ is  an M-matrix, the last inequality yields  (\ref{monotone}) with $k=0$.
Moreover, we have
\begin{eqnarray*}
F(x_1) &=& {\cal M}x_1^{m-1}-b = Mx_1^{[m-1]} + \overline {\cal M} x_1^{m-1}-b\\
    &=& Mx_0^{[m-1]}- \alpha_0 F(x_0)+ \overline  {\cal M} x_1^{m-1}-b\\
    &=&  ( 1-\alpha_0 ) F(x_0) +\overline  {\cal M} (x_1^{m-1}-x_0^{m-1})\\
    &\le & ( 1-\alpha_0 ) F(x_0)\le 0,
\end{eqnarray*}
where the inequality holds because $x_1\ge x_0$ and $\overline {\cal M}\le 0$.

Suppose the inequalities $x_k\ge x_{k-1}\ge 0$ and $F(x_k)\le 0$
hold for some $k\ge 1$.
It follows from (\ref{iter:teq-1}) that
\[
x_{k+1}^{[m-1]}=x_k^{[m-1]} -\alpha_k M^{-1}F(x_k) \ge x_k^{[m-1]}.
\]
Moreover, we can get
\[
F(x_{k+1})= Mx_{k+1}^{[m-1]}+\overline {\cal M}x_{k+1}^{m-1}-b\le
     ( 1-\alpha_k ) F(x_k) \le 0.
\]
By the principle of induction, we claim that the inequalities
in (\ref{monotone}) hold and $F(x_k)\le 0$ for all $k\ge 0$.

Since $\{x_k\}$ is bounded from above, then $\{x_k\}$ converges. Taking limits
in both sizes of (\ref{iter:teq-1}), it is easy to see that the limit of
$\{x_k\}$ is a nonnegative solution of the tensor equation (\ref{eqn:tensor}).
The proof is complete.
\end{proof}

We turn to the convergence rate of the method. Suppose $\{x_k\}\to \bar x$.
Clearly, we have $x_k\le x_{k+1}\le \bar x$. If for some $i$, $\bar x_i=0$, then
$(x_k)_i=0$ for all $k\ge 0$. So, without loss of generality, we assume $\bar x>0$.

Define
\begin{equation}\label{def:phi}
\phi (x)= \Big ( x ^{[m-1]}- \alpha M^{-1}F(x)\Big )^{1/(m-1)}.
\end{equation}
The iterative scheme can be written as
\[
x_{k+1}=\phi (x_k)
\]
and the limit point $\bar x$ satisfies
\[
\bar x=\phi (\bar x).
\]

It follows from (\ref{def:phi}) that
\[
M\phi (x)^{[m-1]}= M x ^{[m-1]}- \alpha F(x).
\]
Without loss of generality, we suppose that ${\cal M}$ is symmetric.
Taking derivative in both sizes of the equality, we obtain
\begin{eqnarray*}
M \mbox{ diag} (\phi (x) ^{[m-2]}) \phi '(x) &=& M \mbox{ diag}(x^{[m-2]}) - \frac {1}{m-1} \alpha F'(x) \\
    &=& M \mbox{ diag}(x^{[m-2]}) -  \alpha {\cal M}x^{m-2}.
\end{eqnarray*}
At the limit $\bar x$,  the last equality yields
\[
M \mbox{ diag} ( \bar x ^{[m-2]}) \phi '(\bar x) = M \mbox{ diag}( \bar x^ {[m-2]}) -  \alpha {\cal M}\bar x^{m-2}.
\]
It implies
\begin{eqnarray*}
\phi' (\bar x) &=& I- \alpha \mbox{ diag} (\bar x^{[-(m-2)]}) M^{-1} {\cal M}\bar x^{m-2}\\
    &=& (1-\alpha )I - \alpha \mbox{ diag} (\bar x^{[-(m-2)]}) M^{-1} \overline {\cal M}\bar x^{m-2}\ge 0.
\end{eqnarray*}
If $M^{-1}b>0$, then it follows from the last equality that
\[
\phi' (\bar x) \bar x = \bar x - \alpha \mbox{ diag} (\bar x^{[-(m-2)]}) M^{-1} {\cal M}\bar x^{m-1}
=\bar x - \alpha \mbox{ diag} (\bar x^{[-(m-2)]})M^{-1}b<\bar x.
\]
Since $\phi' (\bar x)$ is nonnegative, we claim that the spectral radius of $\nabla \phi'(\bar x)$ satisfies
$\rho (\nabla \phi'(\bar x))<1$. As a result, the convergence of the sequence $\{x_k\}$ is linear.
Theorem 4.5 in \cite{Liu-Li-Vong-18} shows that the condition $M^{-1}b>0$ implies that the
the M-TEQ has a unique positive solution.

The above arguments have shown the  following theorem.

\begin{theorem}\label{th:rate}
Suppose that $\cal M$ is a strong M-tensor and $M$ is its majorization matrix.
If $M^{-1}b>0$, then the convergence rate of $\{x_k\}$ generated by  the S-MEQM (\ref{iter:teq-1})
converges to the unique positive solution of the M-tensor equation (\ref{eqn:tensor}) linearly.
\end{theorem}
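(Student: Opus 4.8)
The plan is to recast the scheme as the fixed-point iteration $x_{k+1}=\phi(x_k)$ with $\phi$ given by (\ref{def:phi}), identify its limit, and then show that the Jacobian of $\phi$ at that limit has spectral radius strictly below $1$. First I would note that $M^{-1}b>0$ in particular gives $M^{-1}b\ge 0$, so ${\cal S}\neq\emptyset$ by Theorem \ref{th:positive}(i), and hence Theorem \ref{th:conv-1} applies: starting from any $x_0\in{\cal S}$ the sequence $\{x_k\}$ converges monotonically to some nonnegative solution $\bar x$ of (\ref{eqn:tensor}). Invoking Theorem 4.5 of \cite{Liu-Li-Vong-18}, under $M^{-1}b>0$ the M-TEQ has a unique positive solution; since $\bar x$ is a nonnegative solution it must coincide with it, so in particular $\bar x>0$. (Were some component $\bar x_i$ zero, monotonicity would force $(x_k)_i\equiv 0$; the reason for insisting on $\bar x>0$ is that only then is $\phi$ of class $C^1$ near $\bar x$ — the argument of the $(m-1)$-th root in (\ref{def:phi}) equals $\bar x_i^{m-1}>0$ there because $F(\bar x)=0$ — which is exactly what the differentiation step below requires.) I expect securing $\bar x>0$, the place where uniqueness of the positive solution is genuinely used, to be the step needing the most care.

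Next I would differentiate. Rewriting the iteration as $M\phi(x)^{[m-1]}=Mx^{[m-1]}-\alpha F(x)$ and assuming without loss of generality that $\cal M$ is symmetric in its last $m-1$ modes (this changes neither ${\cal M}x^{m-1}$ nor $F$), so that $F'(x)=(m-1){\cal M}x^{m-2}$, differentiating both sides yields
\[
M\,\mbox{diag}(\phi(x)^{[m-2]})\,\phi'(x)=M\,\mbox{diag}(x^{[m-2]})-\alpha\,{\cal M}x^{m-2}.
\]
Evaluating at $\bar x>0$, using that both $M$ and $\mbox{diag}(\bar x^{[m-2]})$ are invertible together with the identity $\widetilde{\cal M}\bar x^{m-2}=M\,\mbox{diag}(\bar x^{[m-2]})$ (immediate from $\widetilde{\cal M}x^{m-1}=Mx^{[m-1]}$), I obtain
\[
\phi'(\bar x)=I-\alpha\,\mbox{diag}(\bar x^{[-(m-2)]})M^{-1}{\cal M}\bar x^{m-2}=(1-\alpha)I-\alpha\,\mbox{diag}(\bar x^{[-(m-2)]})M^{-1}\overline{\cal M}\,\bar x^{m-2}.
\]
Since $\overline{\cal M}\le 0$ and $\bar x\ge 0$ the matrix $\overline{\cal M}\bar x^{m-2}$ is entrywise nonpositive, while $M^{-1}\ge 0$, $\mbox{diag}(\bar x^{[-(m-2)]})\ge 0$ and $\alpha\in(0,1]$; hence $\phi'(\bar x)\ge 0$.

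To prove $\rho(\phi'(\bar x))<1$ I would test $\phi'(\bar x)$ against the positive vector $\bar x$ itself. Euler's identity for the degree-$(m-1)$ homogeneous map ${\cal M}x^{m-1}$ gives ${\cal M}\bar x^{m-2}\bar x={\cal M}\bar x^{m-1}=b$, whence
\[
\phi'(\bar x)\bar x=\bar x-\alpha\,\mbox{diag}(\bar x^{[-(m-2)]})M^{-1}b<\bar x,
\]
the strict inequality being exactly where $M^{-1}b>0$ (rather than merely $\ge 0$) enters. Thus $\phi'(\bar x)$ is a nonnegative matrix admitting a strictly positive vector $\bar x$ with $\phi'(\bar x)\bar x<\bar x$, and the standard subinvariance argument — with $\mu=\max_i(\phi'(\bar x)\bar x)_i/\bar x_i<1$ one gets $\phi'(\bar x)^k\bar x\le\mu^k\bar x$, so the entries of $\phi'(\bar x)^k$ are bounded by $\mu^k\max_{i,j}\bar x_i/\bar x_j$ — yields $\rho(\phi'(\bar x))\le\mu<1$.

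Finally I would close with the standard local convergence theorem for fixed-point iterations: since $\phi$ is $C^1$ near $\bar x$, $\bar x=\phi(\bar x)$, $\rho(\phi'(\bar x))<1$, and we already know $x_k\to\bar x$, the tail of $\{x_k\}$ eventually lies in a neighbourhood on which $\|\phi(x)-\bar x\|\le\sigma\|x-\bar x\|$ in a suitable norm for some $\sigma\in(\rho(\phi'(\bar x)),1)$, so $\|x_{k+1}-\bar x\|\le\sigma\|x_k-\bar x\|$ for all large $k$; that is, $\{x_k\}$ converges linearly to the unique positive solution. Apart from securing $\bar x>0$, the only other slightly delicate point is the bookkeeping of the tensor-contraction identities $\widetilde{\cal M}\bar x^{m-2}=M\,\mbox{diag}(\bar x^{[m-2]})$ and ${\cal M}\bar x^{m-2}\bar x=b$; everything else is routine. (Implicitly one takes the step sizes constant, $\alpha_k\equiv\alpha$, or at least bounded away from $0$, so that a single map $\phi$ governs the asymptotics.)
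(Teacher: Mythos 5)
Your proposal follows essentially the same route as the paper: recast the iteration as $x_{k+1}=\phi(x_k)$ with $\phi$ from (\ref{def:phi}), differentiate the identity $M\phi(x)^{[m-1]}=Mx^{[m-1]}-\alpha F(x)$ at the positive limit $\bar x$, observe $\phi'(\bar x)\ge 0$ and $\phi'(\bar x)\bar x=\bar x-\alpha\,\mathrm{diag}(\bar x^{[-(m-2)]})M^{-1}b<\bar x$, and conclude $\rho(\phi'(\bar x))<1$. The only differences are cosmetic: you justify $\bar x>0$ by appealing to uniqueness of the positive solution, whereas the paper assumes it ``without loss of generality'' after noting that a zero limit component forces $(x_k)_i\equiv 0$, and you spell out the subinvariance argument that the paper leaves implicit.
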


Let $D$, $-L$ and $-U$ be the diagonal part, strict lower triangular part and strict upper triangle part of
the matrix $M$, i.e.,
\[
M=D-L-U.
\]
Similar to the splitting methods for solving system of linear equations,
on the basis of (\ref{iter:teq-1}), we can develop splitting type methods.

\begin{itemize}
\item The Jacobi iteration:
\begin{equation}\label{iter:Jacobi}
x^{[m-1]}_{k+1} = x_k^{[m-1]}-\alpha_k D^{-1}F (x_k),\;k=0,1,2,\ldots.
\end{equation}
This iterative scheme is the same as the Jacobi method by Ding and Wei \cite{Ding-Wei-16}.
\item The Gauss-Seidal iteration:
\begin{equation}\label{iter:GS}
x^{[m-1]}_{k+1} = x_k^{[m-1]}- \alpha_k (D-L)^{-1} F(x_k) ,\;k=0,1,2,\ldots.
\end{equation}
\item Successive over-relaxation (SOR) iteration:
\begin{equation}\label{iter:SOR}
x^{[m-1]}_{k+1} = x_k^{[m-1]}- \alpha_k \omega (D-\omega L)^{-1} F(x_k) ,\;k=0,1,2,\ldots.
\end{equation}
\end{itemize}

Similar to  Theorem \ref{th:conv-1}, it is not difficult
to establish the monotone convergence of the above iterative methods.
As an example, we derive the convergence of the Gauss-Seidal method below.
%For completeness, we give the proof for the monotone convergence of the Jacobi and SOR methods  in the Appendix.

\begin{theorem}\label{th:conv-3}
Let the sequence $\{x_k\}$ be generated by the Gauss-Seidal method (\ref{iter:GS}) with initial point
$x_0\in {\cal S}$. Then $\{x_k\}\subset {\cal S}$ and converges to a nonnegative solution of
(\ref{eqn:tensor}) monotonically.
\end{theorem}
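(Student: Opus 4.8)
The argument runs parallel to the proof of Theorem~\ref{th:conv-1}, with the majorization matrix $M$ replaced by $D-L$ and one extra term, coming from the strictly upper triangular part $U$, carried along. So the plan is an induction on $k$ establishing simultaneously $x_k\ge 0$ and $F(x_k)\le 0$, i.e. $x_k\in{\cal S}$, together with the monotonicity (\ref{monotone}).

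First I would record two structural facts about $M=D-L-U$. Since $M$ is a strong M-matrix \cite{Liu-Li-Vong-18}, it is a Z-matrix with positive diagonal, so $D>0$, $L\ge 0$ and $U\ge 0$. The lower triangular Z-matrix $D-L$ then has positive eigenvalues (its diagonal entries), hence is a nonsingular M-matrix; concretely $(D-L)^{-1}=\big(\sum_{j\ge 0}(D^{-1}L)^j\big)D^{-1}\ge 0$, because $D^{-1}L\ge 0$ is strictly lower triangular, hence nilpotent. Thus $(D-L)^{-1}\ge 0$, and $D-L$ plays exactly the role $M$ had in Theorem~\ref{th:conv-1}.

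For the induction step, assume $x_k\in{\cal S}$. Rewriting (\ref{iter:GS}) as $(D-L)x_{k+1}^{[m-1]}=(D-L)x_k^{[m-1]}-\alpha_k F(x_k)$, the inequalities $F(x_k)\le 0$ and $(D-L)^{-1}\ge 0$ give $x_{k+1}^{[m-1]}\ge x_k^{[m-1]}\ge 0$, hence $x_{k+1}\ge x_k\ge 0$ since $t\mapsto t^{1/(m-1)}$ preserves the order on $R_+$; this is (\ref{monotone}). For the sign of $F(x_{k+1})$, I would use ${\cal M}x^{m-1}=Mx^{[m-1]}+\overline{\cal M}x^{m-1}$ and $M=(D-L)-U$, substitute the iteration, and simplify with the expression for $F(x_k)$ to obtain
\[
F(x_{k+1})=(1-\alpha_k)F(x_k)-U\big(x_{k+1}^{[m-1]}-x_k^{[m-1]}\big)+\big(\overline{\cal M}x_{k+1}^{m-1}-\overline{\cal M}x_k^{m-1}\big).
\]
The first term is $\le 0$ because $\alpha_k\in(0,1]$ and $F(x_k)\le 0$; the second is $\le 0$ because $U\ge 0$ and $x_{k+1}^{[m-1]}\ge x_k^{[m-1]}$; the third is $\le 0$ because $\overline{\cal M}\le 0$ and $0\le x_k\le x_{k+1}$ make $x\mapsto\overline{\cal M}x^{m-1}$ coordinatewise nonincreasing on $R_+^n$. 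Hence $F(x_{k+1})\le 0$, so $x_{k+1}\in{\cal S}$, closing the induction.

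Finally, $\{x_k\}$ is nondecreasing and, by Lemma~\ref{lm:M-tensor}, contained in the bounded set ${\cal S}$, so it converges; passing to the limit in (\ref{iter:GS}) as in the proof of Theorem~\ref{th:conv-1} shows the limit is a nonnegative solution of (\ref{eqn:tensor}). I do not expect a genuine obstacle here: the content is a routine adaptation of Theorem~\ref{th:conv-1}. The only points needing a little care are the claim that $D-L$ inherits the nonsingular M-matrix property (equivalently $(D-L)^{-1}\ge 0$ together with $U\ge 0$) and the sign bookkeeping for the genuinely tensorial term $\overline{\cal M}x^{m-1}$ under the componentwise order; note also that the passage $x_{k+1}^{[m-1]}\ge x_k^{[m-1]}\Rightarrow x_{k+1}\ge x_k$ relies on the nonnegativity of the iterates, which is itself part of the induction hypothesis.
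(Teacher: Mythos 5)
Your proposal is correct and follows essentially the same route as the paper: rewrite $M=(D-L)-U$, use $(D-L)^{-1}\ge 0$ to get monotonicity, and derive the identity $F(x_{k+1})=(1-\alpha_k)F(x_k)-U\bigl(x_{k+1}^{[m-1]}-x_k^{[m-1]}\bigr)+\overline{\cal M}\bigl(x_{k+1}^{m-1}-x_k^{m-1}\bigr)\le 0$ to close the induction. The only difference is that you spell out two details the paper leaves implicit, namely the Neumann-series argument for $(D-L)^{-1}\ge 0$ and the appeal to Lemma~\ref{lm:M-tensor} for boundedness of ${\cal S}$.
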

\begin{proof}
For $k=0$, we have $F(x_0)\le 0$ and
\[
x_1^{[m-1]}=x_0^{[m-1]} - \alpha_0 (D-L)^{-1}F(x_0)\ge x_0^{[m-1]}.
\]
It implies
\begin{eqnarray*}
F(x_1) &=& {\cal M}x_1^{m-1}-b = Mx_1^{[m-1]} + \overline {\cal M} x_1^{m-1}-b\\
    &=& (D-L-U)x_1^{[m-1]}+\overline {\cal M} x_1^{m-1}-b\\
    &=& (D-L)x_1^{[m-1]}-Ux_1^{m-1}+\overline {\cal M} x_1^{m-1}-b\\
    &=& (D-L)x_0^{[m-1]}-\alpha_0 F(x_0)-Ux_1^{m-1}+\overline {\cal M} x_1^{m-1}-b\\
    &=& (1-\alpha_0)F(x_0)-U(x_1^{[m-1]}-x_0^{[m-1]})+\overline {\cal M} (x_1^{m-1}-x_0^{m-1})\\
    &\le & 0,
\end{eqnarray*}

Suppose the inequalities $x_k\ge x_{k-1}\ge 0$ and $F(x_k)\le 0$
hold for some $k\ge 1$. We can get for all $\alpha_k \in (0,1]$,

\[
x_{k+1}^{[m-1]}=x_k^{[m-1]} - \alpha_k (D-L)^{-1}F(x_k)\ge x_k^{[m-1]}
\]
and
\begin{eqnarray*}
F(x_{k+1}) &=& Mx_{k+1}^{[m-1]} + \overline {\cal M}x_{k+1}^{m-1}-b \\
    &=& (D-L) x_{k+1}^{[m-1]} -U x_{k+1}^{[m-1]}+ \overline {\cal M}x_{k+1}^{m-1}-b\\
     &=& (D-L)x_k^{[m-1]}-\alpha_k F(x_k) -U x_{k+1}^{[m-1]}+ \overline {\cal M}x_{k+1}^{m-1}-b\\
     &=&  Mx_k^{[m-1]} - \alpha_k F(x_k)-U(x_{k+1}^{[m-1]}-x_k^{[m-1]})+ \overline {\cal M}x_{k+1}^{m-1}-b \\
     &=& (1-\alpha_k )F(x_k)-U(x_{k+1}^{[m-1]}-x_k^{[m-1]}) + \overline {\cal M}(x_{k+1}^{m-1}-x_k^{m-1})\le 0.
\end{eqnarray*}

By induction, it is not difficult to show that if $x_0\ge 0$ and $F(x_0)\le 0$,
then the following inequalities hold for all $k\ge 0$:
\[
 x_{k+1}^{[m-1]}\ge x_k^{[m-1]}\ge 0,\quad
F(x_k)\le 0.
\]
Since $\{x_k\}$ is bounded from above, we claim that $\{x_k\}$ converges and hence $\{F(x_k)\}$ converges to 0.
The proof is complete.
\end{proof}

\section{Improvement}
\setcounter{equation}{0}

In this section, we make some improvement to the monotone method proposed in the last section.
We rewrite the Newton method as
\[
0={\cal M}x_k^{m-2}(x_{k+1}-x_k)+\frac 1{m-1}F(x_k)
    =Mx_{k+1}^{[m-1]}-Mx_k^{[m-1]} + \frac 1{m-1}F(x_k) +r_k,
\]
where
\[
r_k= {\cal M}x_k^{m-2}(x_{k+1}-x_k)- (Mx_{k+1}^{[m-1]}-Mx_k^{[m-1]} ).
\]

The S-MEQ method (\ref{iter:teq-1}) developed in Section 3 neglected the term $r_k$.
Since $r_k=O(\|x_{k+1}-x_k\|)$, it might be  important for Newton's method
to be quadratically convergent. As a result, the method (\ref{iter:teq-1}) may not be
a good approximation to Newton's method.
In this section, we consider to improve the S-MEQ method by using more information of $r_k$.

Without loss of generality, we suppose that $\cal M$ is semi-symmetric. So we have
\[
({\cal M}x^{m-1})'=(m-1){\cal M}x^{m-2}.
\]
Denote
\[
r(x)= \frac 1{m-1} ({\cal M}x^{m-1}-(m-1)Mx^{[m-1]}).
\]
We have
\[
r'(x)= {\cal M}x^{m-2}-(m-1)M\text{diag}(x^{[m-2]})
\]
and
\begin{eqnarray*}
r_k &=& {\cal M}x^{m-2}_k (x_{k+1}-x_k)- M (x^{[m-1]}_{k+1}-x_k^{[m-1]})\\
&=&  {\cal M}x^{m-2}_k (x_{k+1}-x_k)- (m-1)M\text{diag}(x_k^{[m-2]})(x_{k+1}-x_k)+o(\|x_{k+1}-x_k\|)\\
    &=& r'(x_k) (x_{k+1}-x_k)+o(\|x_{k+1}-x_k\|)\\
    &=& r(x_{k+1})-r(x_k) +o(\|x_{k+1}-x_k\|)\\
    &=& \frac 1{m-1} (F(x_{k+1})-F(x_k)) - M (x_{k+1}^{[m-1]}-x_k^{[m-1]}) +o(\|x_{k+1}-x_k\|)\\
    &\stackrel\triangle {=} & \frac 1{m-1} y_k - M s_k+o(\|x_{k+1}-x_k\|),
\end{eqnarray*}
where
\[
y_k=F(x_{k+1})-F(x_k),\quad s_k= x_{k+1}^{[m-1]}-x_k^{[m-1]}.
\]
Consequently, it follows from Newton's method that
\begin{eqnarray*}
0 &=& \frac {1}{m-1} F(x_k) +{\cal M}x_k^{m-2}(x_{k+1}-x_k)\\
    &=& \frac {1}{m-1} F(x_k)
        + (M x_{k+1} ^{[m-1]} - Mx_k^{[m-1]}) +
        ( {\cal M}x^{m-2}_k (x_{k+1}-x_k)- M (x^{[m-1]}_{k+1}-x_k^{[m-1]}))\\
    &=& \frac {1}{m-1} F(x_k)
        + (M x_{k+1} ^{[m-1]} - Mx_k^{[m-1]}) + r'(x_k)(x_{k+1}-x_k) +o(\|x_{k+1}-x_k\|)\\
    &=& \frac {1}{m-1} F(x_k)
        + (M x_{k+1} ^{[m-1]} - Mx_k^{[m-1]}) + r(x_{k+1})-r(x_k) +o(\|x_{k+1}-x_k\|),\quad k=0,1,2,\ldots.
\end{eqnarray*}
A reasonable approximation to Newton's method is to let $x_{k+1}$ satisfy
\[
\frac {1}{m-1} F(x_k)
        + (M x_{k+1} ^{[m-1]} - Mx_k^{[m-1]}) + r(x_{k+1})-r(x_k)=0.
\]
%or
%\[
%\frac {1}{m-1} F(x_k)
%        + (M x_{k+1} ^{[m-1]} - Mx_k^{[m-1]}) + r'(x_k) (x_{k+1}-x_k)=0.
%\]
However, the point $x_{k+1}$ is not known in advance. So, we consider to
use $\epsilon_k\stackrel\triangle {=}r(x_k)-r(x_{k-1})$ instead of the term $r(x_{k+1})-r(x_k)$
in the last equation. This results in the following iteration:
\begin{equation}\label{iter:improve-1}
M x_{k+1} ^{[m-1]} - Mx_k^{[m-1]}+\delta_k=0,\quad k=0,1,2,\ldots,
\end{equation}
where
\[
\delta_k=\frac {1}{m-1} F(x_k) +\epsilon_k.
\]
More generally, we propose the following iterative scheme:
\begin{equation}\label{iter:improve}
M x_{k+1} ^{[m-1]} - Mx_k^{[m-1]}+\alpha_k F(x_k) +\epsilon_k=0,\quad k=0,1,2,\ldots,
\end{equation}
%
%\begin{equation}\label{def:delta}
%\delta_k=\alpha_k F(x_k) +\epsilon_k,
%\end{equation}
where $\alpha_k\in (0,1]$ and
$\epsilon_k$ is chosen in the way that  $\epsilon_0=0$ and for  $k\ge 1$,
$\epsilon_k = r(x_k)-r(x_{k-1})$ .
However, such a simple choice rule for $\epsilon_k$
could not guarantee the generated sequence $\{x_k\}$ contained in $\cal S$.
As a result, the convergence of the related method is doubtful.
In what follows,
we give some other reasonable choice for $\epsilon_k$ to ensure the monotone convergence
of $\{x_k\}$ to a nonnegative solutio of (\ref{eqn:tensor}).

In order for the method to be monotonically convergent, we need the requirement
$\alpha_k F(x_k)+\epsilon_k\le 0$, $\forall k$  to ensure the monotone property of $\{x_k\}$.
It is satisfied if we let $\epsilon_k$ satisfy
\begin{equation}\label{ep-1}
\epsilon_k\le -\alpha_k F(x_k)\stackrel\triangle {=}\epsilon_k^+.
\end{equation}
On the other hand, we also need the condition
$F(x_k)\le 0$, $\forall k$ to ensure $\{x_k\}\subset {\cal S}$.
This motives us to determine $\epsilon_k$ in the following way.
At iteration $k$, we first let
$\epsilon_k=\min\{\epsilon_k^+,  r(x_k)-r(x_{k-1})\}$ and then solve the system
of linear equations (\ref{iter:improve}) to get a $\bar x_{k+1}$. If $F(\bar x_{k+1})\le 0$,
then we let $x_{k+1}=\bar x_{k+1}$. Otherwise, we let $\epsilon_k=0$ and solve (\ref{iter:improve})
to get $x_{k+1}$.

Based on the above arguments, we propose an approximate Newton method for solving
the M-tensor equation (\ref{eqn:tensor}) as follows.

\begin{algorithm}\label{A-Newton}{(\bf Approximate Newton Method)}
\begin{itemize}
\item [] {\bf Initial. } Given positive sequence $\{\alpha_k\}\subset (0,1]$.
Given constant $\eta>0$ and initial point $x_0\in {\cal S}$.
Let  $\epsilon_0=0$ and $k=0$.
\item [] {\bf Step 1. } Stop if $\|F(x_k)\|\le \eta $.
\item [] {\bf Step 2. } Solve the system of linear equation (\ref{iter:improve}) to get $\bar x_{k+1}$.
\item [] {\bf Step 3. } If $F(\bar x_{k+1})\leq 0$, then go to Step 4. Otherwise, let $\epsilon_k=0$. Go to Step 2.
\item [] {\bf Step 4. } Let $x_{k+1}=\bar x_{k+1}$ and $\epsilon_{k+1}=\min\{\epsilon_{k+1}^+,  r(x_{k+1})-r(x_k)\}$.
Let $k=k+1$. Go to Step 1.
\end{itemize}
\end{algorithm}

{\bf Remark.} Notice that if $\epsilon_k=0$, then the method reduces to the S-MEQM. Consequently, if for some
$k$, $F(\bar x_{k+1})> 0$, then the iterate $x_{k+1}$ is generated by the S-MEQM, which guarantees
$F(x_{k+1})\le 0$. In other words, at each iteration, the circle between Steps 2 and 3 is no more than once.

It is easy to show by induction that the sequence $\{x_k\}$ generated by Algorithm \ref{A-Newton}
satisfies $x_{k+1}\ge x_k$ and $F(x_k)\le 0$, $\forall k$.
Similar to the proof of Theorem \ref{th:conv-1},
we have the following result.

\begin{theorem}\label{th:conv-improve}
Suppose that ${\cal M}$ is a strongly M-tensor.
Then the sequence $\{x_k\}$ be generated by Algorithm \ref{A-Newton} satisfies
$\{x_k\}\subset {\cal S}$ and
\[
x_{k+1}\ge x_k, \quad  \forall k\ge 0.
\]
Moreover, it converges to a nonnegative solution to the tensor equation (\ref{eqn:tensor}).
\end{theorem}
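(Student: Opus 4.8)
The plan is to mirror the induction used in the proof of Theorem~\ref{th:conv-1}, showing simultaneously that $\{x_k\}\subset{\cal S}$ and that $\{x_k\}$ is nondecreasing, and then to extract convergence from monotonicity plus boundedness. The induction hypothesis at step $k$ is: $x_k\ge 0$, $F(x_k)\le 0$ (i.e.\ $x_k\in{\cal S}$), and $x_k\ge x_{k-1}$. First I would examine a single step of Algorithm~\ref{A-Newton}. By the Remark following the algorithm, the loop between Steps~2 and~3 is executed at most twice: either the first candidate $\bar x_{k+1}$ computed with $\epsilon_k=\min\{\epsilon_k^+,\,r(x_k)-r(x_{k-1})\}$ already satisfies $F(\bar x_{k+1})\le 0$ and we accept it, or we reset $\epsilon_k=0$, in which case the iteration~(\ref{iter:improve}) becomes exactly the S-MEQM iteration~(\ref{iter:teq-1}) and the argument of Theorem~\ref{th:conv-1} applies verbatim to give $F(x_{k+1})\le 0$. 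Either way the accepted iterate satisfies $F(x_{k+1})\le 0$, so $x_{k+1}\in{\cal S}$ once we also know $x_{k+1}\ge 0$.

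For monotonicity, note that in both branches the accepted $\epsilon_k$ satisfies $\epsilon_k\le\epsilon_k^+=-\alpha_k F(x_k)$ by~(\ref{ep-1}) (the reset value $\epsilon_k=0$ also satisfies this because $F(x_k)\le 0$). Hence $\alpha_k F(x_k)+\epsilon_k\le 0$, and~(\ref{iter:improve}) gives
\[
M x_{k+1}^{[m-1]} = M x_k^{[m-1]} - \bigl(\alpha_k F(x_k)+\epsilon_k\bigr) \ge M x_k^{[m-1]}.
\]
Since ${\cal M}$ is a strong M-tensor, its majorization matrix $M$ is a (strong) M-matrix, so $M^{-1}\ge 0$; therefore $x_{k+1}^{[m-1]}\ge x_k^{[m-1]}$, and because $t\mapsto t^{1/(m-1)}$ is monotone on $R_+$ this yields $x_{k+1}\ge x_k\ge 0$. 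This closes the induction for the ``$\ge$'' chain and for nonnegativity; combined with the previous paragraph it gives $\{x_k\}\subset{\cal S}$ and $x_{k+1}\ge x_k$ for all $k$. The base case $k=0$ holds by the choice $x_0\in{\cal S}$ and $\epsilon_0=0$, which reduces the first step to the S-MEQM step already handled in Theorem~\ref{th:conv-1}.

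Finally, since $\{x_k\}\subset{\cal S}$ and ${\cal S}$ is bounded by Lemma~\ref{lm:M-tensor}, the nondecreasing sequence $\{x_k\}$ is bounded above and hence converges, say $x_k\to\bar x$. It remains to identify the limit. The subtle point is the behavior of $\epsilon_k$: because $x_k-x_{k-1}\to 0$ and $r$ is continuously differentiable, $r(x_k)-r(x_{k-1})\to 0$, so $\epsilon_k^+=-\alpha_k F(x_k)$ and the accepted $\epsilon_k$ both satisfy $\epsilon_k\to 0$ along any subsequence where we can control $\alpha_k$ (and even without that, $0\ge \alpha_k F(x_k)+\epsilon_k\to 0$ forces $M x_{k+1}^{[m-1]}-Mx_k^{[m-1]}\to 0$). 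Taking limits in~(\ref{iter:improve}) and using continuity of $F$ then gives $\alpha_k F(x_k)+\epsilon_k\to 0$; since $\epsilon_k\to 0$ and $\alpha_k$ is bounded away from behaving pathologically (one may assume $\liminf\alpha_k>0$, or simply argue from $F(x_k)\le 0$ monotone and $M s_k\to 0$), we conclude $F(\bar x)=0$. The main obstacle is precisely this last step — ensuring $\epsilon_k\to 0$ and that the limit satisfies $F(\bar x)=0$ rather than merely $F(\bar x)\le 0$ — which is why I would lean on the identity $r(x_k)-r(x_{k-1})=r'(\xi_k)(x_k-x_{k-1})$ with $x_k-x_{k-1}\to 0$ to kill $\epsilon_k$, and then read off $F(\bar x)=0$ from the limiting form of~(\ref{iter:improve}).
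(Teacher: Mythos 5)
Your proposal is correct and follows essentially the same route as the paper, which only sketches this result by remarking that the induction is ``similar to the proof of Theorem~\ref{th:conv-1}'': you establish $\alpha_k F(x_k)+\epsilon_k\le 0$ from the choice of $\epsilon_k$ (or its reset to $0$) to get monotonicity via $M^{-1}\ge 0$, use Step~3 of the algorithm together with the S-MEQM fallback to keep $F(x_k)\le 0$, and invoke boundedness of ${\cal S}$ for convergence. You are in fact more careful than the paper on the final identification $F(\bar x)=0$, correctly noting that one needs $\epsilon_k\to 0$ (from $x_k-x_{k-1}\to 0$) and $\alpha_k$ bounded away from zero — an assumption the paper leaves implicit.
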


\section{Numerical Results}
\setcounter{equation}{0}

In this section, we do numerical experiments to test the effectiveness of the proposed methods.
We implemented our methods in Matlab R2015b and ran the codes on a laptop computer with 2.40 GHz CPU and 4.00 GB RAM.
We used a tensor toolbox \cite{Bader-Kolda-15} to proceed tensor computation.

While do numerical experiments, similar to \cite{Han-17,He-Ling-Qi-Zhou-18}, we solved the tensor equation
\[
\hat{F}(x)=\hat{\cal {M}}x^{m-1}-\hat{b}=0
\]
instead of the tensor equation (\ref{eqn:tensor}),
where $\hat{\cal{M}}:=\cal{M}/\omega$ and $\hat{b}:=b/\omega$ with $\omega$ is the largest value among
the absolute values of components of $\cal{M}$ and $b$.
We set the initial point to be $x_0=(0,0,\ldots,0)^T$,  and the elements of  ${\emph{b}}$ be uniformly
distributed in $(0,1)$ except for the Problem 3. The stopping criterion  is set to
\[
\|\hat{F}(x_k)\|\leq 10^{-8}.
\]
or the number of iteration reaches to 3000. In all cases, we take the parameter $\alpha_k=\alpha$ be
constant.

The test problems are from \cite{Ding-Wei-16,Li-Xie-Xu-17,Xie-Jin-Wei-2017}.

{\bf Problem 1} We solve tensor equation (\ref{eqn:tensor}) where ${\cal M}$ is a 4-th order symmetric strong M-tensor in the form ${\cal M}=s{\cal I}-{\cal B}$, where tensor ${\cal B}$ is symmetric
whose entries are uniformly distributed in $(0,1)$, and
\[
s=(1+0.01)\cdot\max_{i=1,2,\ldots,n}({\cal B}{\bf e}^{3})_i,
\]
where ${\bf e}=(1,1,\ldots,1)^T$.

{\bf Problem 2.} We solve tensor equation (\ref{eqn:tensor}) where $\cal M$ is a 4-th order symmetric strong M-tensor in the form ${\cal M}=s{\cal I}-{\cal B}$, and tensor ${\cal B}$ is a nonnegative tensor with
\[
b_{i_1i_2i_3i_4}=|\mathrm{sin}(i_1+i_2+i_3+i_4)|,
\]
and $s=n^3$.

{\bf Problem 3.} We consider the ordinary differential equations
\[
\frac{d^2x(t)}{dt^2}=-\frac{f(t)}{x(t)^2}  \quad\mathrm {in \,(0,1),}
\]
with Dirichlet's boundary conditions
\[
x(0)=c_0,\,\,\,x(1)=c_1.
\]

This equation is the one-dimensional situation of the boundary value problem, which can be
used to describe a particle's movement under the gravitation
\[
\frac{d^2x(t)}{dt^2}=-\frac{GM}{x(t)^2}
\]
where $G\approx6.67\times10^{-11}Nm^2/kg^2$ is the gravitational constant and $M\approx5.98\times10^{24}kg$ is the mass of the earth.\\

After the discretization, we get a system of polynomial equations

\[\left \{ \begin{array}{cll}
& x_1^3=c_0^3,\\
& 2x_i^3-x_i^2x_{i-1}-x_i^2x_{i+1}=\frac{GM}{(n-1)^2},\,\,i=2,3,\ldots,n-1,\\
& x_n^3=c_1^3.
\end{array}\right.
\]

It is easy to see that the last system of equations can be written as a tensor equation
\[
{\cal A}x^3=b,
\]
where the coefficient tensor $\cal A$ is a 4th-order tensor with elements
\[\left \{ \begin{array}{cll}
& a_{1111}=a_{nnnn}=1,\\
& a_{iiii}=2,\,\,i=2,3,\ldots,n-1,\\
& a_{i(i-1)ii}=a_{ii(i-1)i}=a_{iii(i-1)}=-1/3,\,\,i=2,3,\ldots,n-1\\
& a_{i(i+1)ii}=a_{ii(i+1)i}=a_{iii(i+1)}=-1/3,\,\,i=2,3,\ldots,n-1,
\end{array}\right.
\]
and the right-hand side is a vector with elements
\[\left \{ \begin{array}{cll}
& b_1=c_0^3,\\
& b_i=\frac{GM}{(n-1)^2},\,\,i=2,3,\ldots,n-1\\
& b_n=c_1^3.
\end{array}\right.
\]
We set $c_0=c_1=6.37\times 10^6$.

{\bf Problem 4.} We solve tensor equation (\ref{eqn:tensor}) where $\cal M$ is a 4-th order non-symmetric strong M-tensor in the form ${\cal M}=s{\cal I}-{\cal B}$, and tensor ${\cal B}$ is nonnegative
tensor whose entries are uniformly distributed in $(0,1)$.
The parameter $s$ is set to
\[
s=(1+0.01)\cdot\max_{i=1,2,\ldots,n}({\cal B}{\bf e}^{3})_i,
\]
where ${\bf e}=(1,1,\ldots,1)^T$.

We first tested the performance of the sequential M matrix equation method (\ref{iter:teq-1}), abbreviated as S-MEQM,
with different values of $\alpha$ on the following
Problem 1 with different dimensions $n=10$, $20$, $30$, $40$, $50$.
 For each $\alpha$,
we tested  the method S-MEQM on 100 problems with different sizes $n=10,20,30,40$ and $50$.
The results are listed in Table \ref{alpha-S-MEQM} where the columns `Iter' and `Time' stand for the
total number of iterations
and the computational time (in second) used for the method.
The results with $\alpha\in (0,0.5)$ were not listed in the table because those results are not as
good as the method with $\alpha \in (0.5, 1]$.
We also tested the S-MEQM with the value $\alpha>1$.
 Although we could not establish the convergence of the method  S-MEQM
with $\alpha>1$, the numerical results in Table\ref{alpha-S-MEQM}  seem to show that the method still works for all $\alpha \in (1,2)$.
Moreover, the best value of $\alpha$ seems around $1.9$.

We then tested the approximate Newton method Algorithm \ref{A-Newton}
which we  abbreviated as A-Newton. Table \ref{A-Newton-1} lists the performance of the method A-Newton with
different values of $\alpha$ on Problem 1 with different dimensions. For each $\alpha$,
we also tested  the method A-Newton on 100 problems with different sizes $n=10,20,30,40$ and $50$.
We see from the table that the best parameter $\alpha$ seems near $1$.
%It is interesting to notice from the results that for each $\alpha$, the numbers of iterations
%did not increase with the size of the problem.

The results in Tables \ref{alpha-S-MEQM} and \ref{A-Newton-1} show that the A-Newton method performed
much better then S-MEQM
if we choose the parameter $\alpha$ appropriately.

{
\begin{table}
\footnotesize
\centering
\begin{tabular}{c|cc|cc|cc|cc|cc}
\hline\hline
$n$ & \multicolumn{2}{c|}{10} & \multicolumn{2}{c|}{20} & \multicolumn{2}{c|}{30} & \multicolumn{2}{c|}{40} & \multicolumn{2}{c}{50}\\\hline
$\alpha$ & Iter & Time & Iter & Time & Iter & Time & Iter & Time & Iter & Time\\\hline
0.50 	&	1000.5 	&	0.1613 	&	1189.2 	&	0.2183 	&	1217.4 	&	1.1088 	&	1192.2 	&	3.1413 	&	1139.9 	&	8.4224\\
0.60 	&	833.6 	&	0.1149 	&	991.8 	&	0.1819 	&	1010.7 	&	0.9063 	&	988.5 	&	2.5955 	&	950.4 	&	6.9442\\
0.70 	&	694.8 	&	0.0954 	&	854.2 	&	0.1564 	&	871.5 	&	0.7631 	&	853.1 	&	2.2383 	&	813.4 	&	5.9696\\
0.80 	&	604.6 	&	0.0827 	&	768.5 	&	0.1415 	&	759.7 	&	0.6896 	&	745.3 	&	1.9605 	&	708.9 	&	5.2117\\
0.90 	&	557.5 	&	0.0761 	&	665.6 	&	0.1221 	&	668.0 	&	0.6209 	&	657.0 	&	1.7505 	&	632.3 	&	4.6709\\
1.00 	&	481.6 	&	0.0656 	&	602.1 	&	0.1107 	&	611.0 	&	0.5463 	&	597.6 	&	1.5566 	&	570.0 	&	4.1866\\ \hline\hline
1.10 	&	447.6 	&	0.0606 	&	539.6 	&	0.0980 	&	553.4 	&	0.5056 	&	536.0 	&	1.4084 	&	517.7 	&	3.8014\\
1.20 	&	414.0 	&	0.0559 	&	501.9 	&	0.0910 	&	509.6 	&	0.4681 	&	494.3 	&	1.2887 	&	474.0 	&	3.5235\\
1.30 	&	376.2 	&	0.0514 	&	465.9 	&	0.0847 	&	466.2 	&	0.4180 	&	455.4 	&	1.1821 	&	438.5 	&	3.1631\\
1.40 	&	345.0 	&	0.0470 	&	425.8 	&	0.0790 	&	433.9 	&	0.3815 	&	419.8 	&	1.1067 	&	405.0 	&	2.9130\\
1.50 	&	332.9 	&	0.0444 	&	393.9 	&	0.0720 	&	404.1 	&	0.3528 	&	393.9 	&	1.0292 	&	378.2 	&	2.7132\\
1.60 	&	305.8 	&	0.0418 	&	366.5 	&	0.0661 	&	378.8 	&	0.3379 	&	369.3 	&	0.9584 	&	354.3 	&	2.5471\\
1.70 	&	289.4 	&	0.0396 	&	349.2 	&	0.0624 	&	355.4 	&	0.3269 	&	349.4 	&	0.9258 	&	332.9 	&	2.4008\\
1.80 	&	288.8 	&	0.0399 	&	325.2 	&	0.0586 	&	336.7 	&	0.3023 	&	327.6 	&	0.8502 	&	315.3 	&	2.2606\\
1.90 	&	253.1 	&	0.0342 	&	314.4 	&	0.0569 	&	319.4 	&	0.2949 	&	310.1 	&	0.8047 	&	298.2 	&	2.1302\\
1.93 	&	264.0 	&	0.0353 	&	309.6 	&	0.0683 	&	317.3 	&	0.2766 	&	304.9 	&	0.8059 	&	294.1 	&	2.1562\\
1.94 	&	265.5 	&	0.0365 	&	302.1 	&	0.0549 	&	312.1 	&	0.2815 	&	303.3 	&	0.7963 	&	290.5 	&	2.1227\\
1.95 	&	301.1 	&	0.0406 	&	301.6 	&	0.0553 	&	309.9 	&	0.2696 	&	303.7 	&	0.7941 	&	289.9 	&	2.1168\\
1.96 	&	411.2 	&	0.0561 	&	304.9 	&	0.0549 	&	311.1 	&	0.2702 	&	300.8 	&	0.7931 	&	289.0 	&	2.1125\\
1.97 	&	736.4 	&	0.1009 	&	355.4 	&	0.0660 	&	322.1 	&	0.2817 	&	307.1 	&	0.8119 	&	292.1 	&	2.1330\\
2.00 	&	3000.0 	&	0.4108 	&	3000.0 	&	0.5674 	&	3000.0 	&	2.6859 	&	3000.0 	&	7.8874 	&	3000.0 	&	21.3840\\ \hline\hline
\end{tabular}\vspace{-3mm}
\caption{\footnotesize Results for S-MEQM with different $\alpha_k$ on Problem 1.}\label{alpha-S-MEQM}
\end{table}
}

{
\begin{table}
\footnotesize
\centering
\begin{tabular}{c|cc|cc|cc|cc|cc}\hline\hline
$n$ & \multicolumn{2}{c|}{ 10} & \multicolumn{2}{c|}{20} & \multicolumn{2}{c|}{30} & \multicolumn{2}{c|}{40}
& \multicolumn{2}{c}{50} \\ \hline
$\alpha_k$ & Iter & Time & Iter & Time & Iter & Time & Iter & Time & Iter & Time \\ \hline
0.10 	&	87.4 	&	0.0455 	&	89.5 	&	0.0536 	&	86.7 	&	0.0959 	&	81.7 	&	0.2099 	&	82.5 	&	0.5859\\
0.20 	&	70.8 	&	0.0401 	&	71.1 	&	0.0399 	&	67.6 	&	0.0677 	&	63.9 	&	0.1643 	&	59.9 	&	0.4431\\
0.30 	&	63.2 	&	0.0349 	&	61.5 	&	0.0213 	&	58.5 	&	0.0514 	&	55.2 	&	0.1424 	&	51.9 	&	0.3717\\
0.40 	&	58.6 	&	0.0314 	&	54.9 	&	0.0155 	&	54.2 	&	0.0482 	&	49.0 	&	0.1270 	&	48.7 	&	0.3488\\
0.50 	&	56.0 	&	0.0287 	&	54.1 	&	0.0149 	&	50.4 	&	0.0455 	&	45.0 	&	0.1166 	&	44.4 	&	0.3189\\
0.60 	&	53.0 	&	0.0250 	&	52.4 	&	0.0151 	&	47.4 	&	0.0488 	&	45.2 	&	0.1157 	&	43.5 	&	0.3132\\
0.70 	&	57.3 	&	0.0321 	&	49.4 	&	0.0139 	&	46.1 	&	0.0425 	&	39.7 	&	0.1028 	&	42.8 	&	0.3083\\
0.80 	&	53.6 	&	0.0309 	&	48.7 	&	0.0123 	&	46.8 	&	0.0417 	&	39.7 	&	0.1030 	&	41.3 	&	0.3211\\
0.90 	&	48.5 	&	0.0174 	&	46.2 	&	0.0114 	&	46.0 	&	0.0407 	&	39.3 	&	0.1018 	&	40.5 	&	0.3165\\
1.00 	&	50.9 	&	0.0128 	&	44.5 	&	0.0101 	&	40.3 	&	0.0358 	&	37.0 	&	0.1016 	&	37.8 	&	0.3294\\
 \hline\hline
\end{tabular}
\caption{\footnotesize Results for A-Newton method with different $\alpha_k$ on Problem 1.}\label{A-Newton-1}
\end{table}

}

In order to compare the performance between the methods S-MEQM and A-Newton, for each problem, we draw
some figures to show the performance of the two methods. In order to ensure the convergence of the method, we select the parameters $\alpha=1$ in both S-MEQM and A-Newton.
Details are given in Figures 1-8. We can clearly see from those figures that for all test problems, the A-Newton
method performed much better than the S-MEQM.
%Specifically, we set $\alpha =1.96$ in the S-MEQM and $\alpha =0.91$ in the A-Newton method.
%In these figures, the vertical axis denotes the norm of the residuals, i.e., $|\hat {{\cal M}}x^{m-1}-\hat{b}|$.
%We can clearly see form the Figures 1 and 2 that the method A-Newton performed much better than the S-MEQM method did.

\begin{figure}[H]\label{fig:1}
\centering
\includegraphics[width=0.33\textwidth]{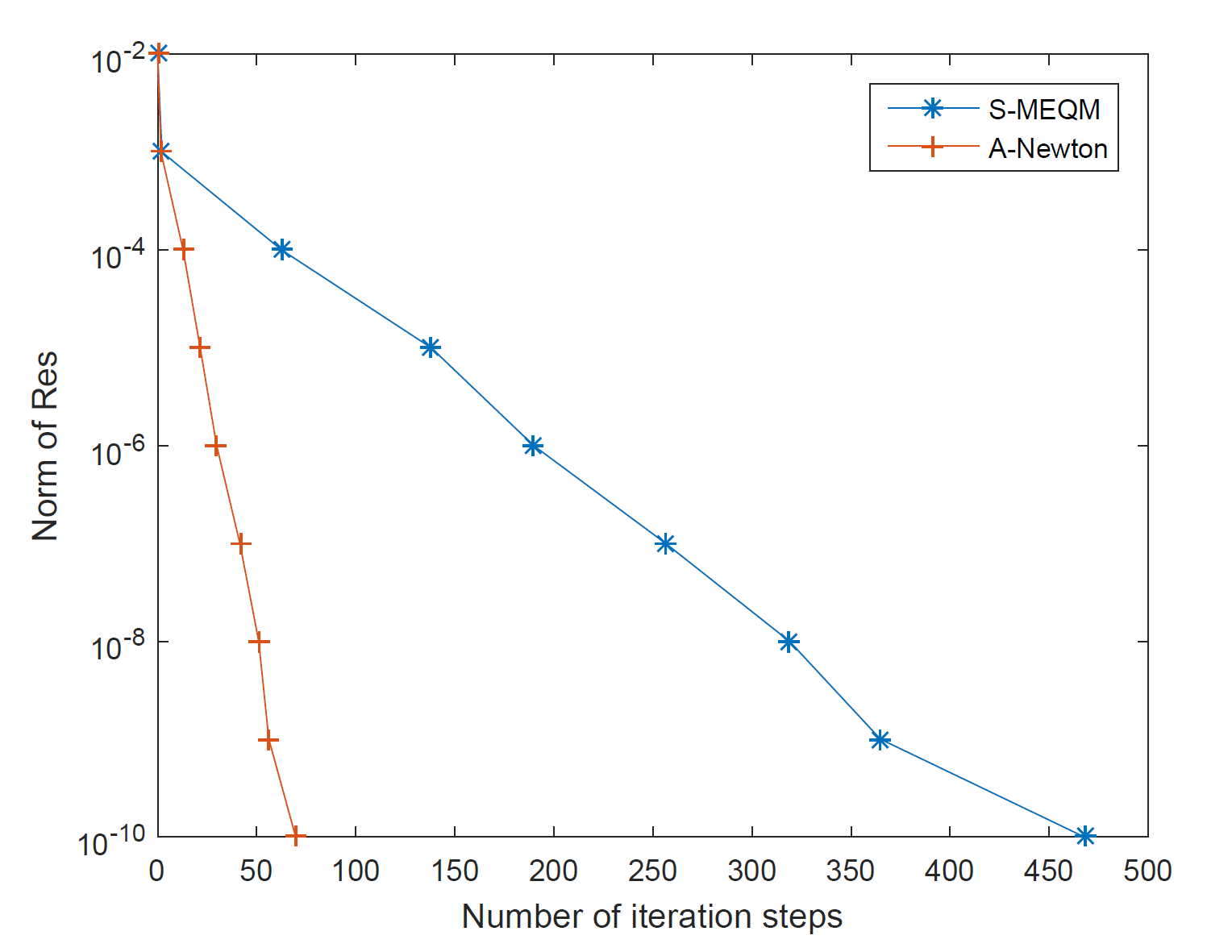}
\includegraphics[width=0.32\textwidth]{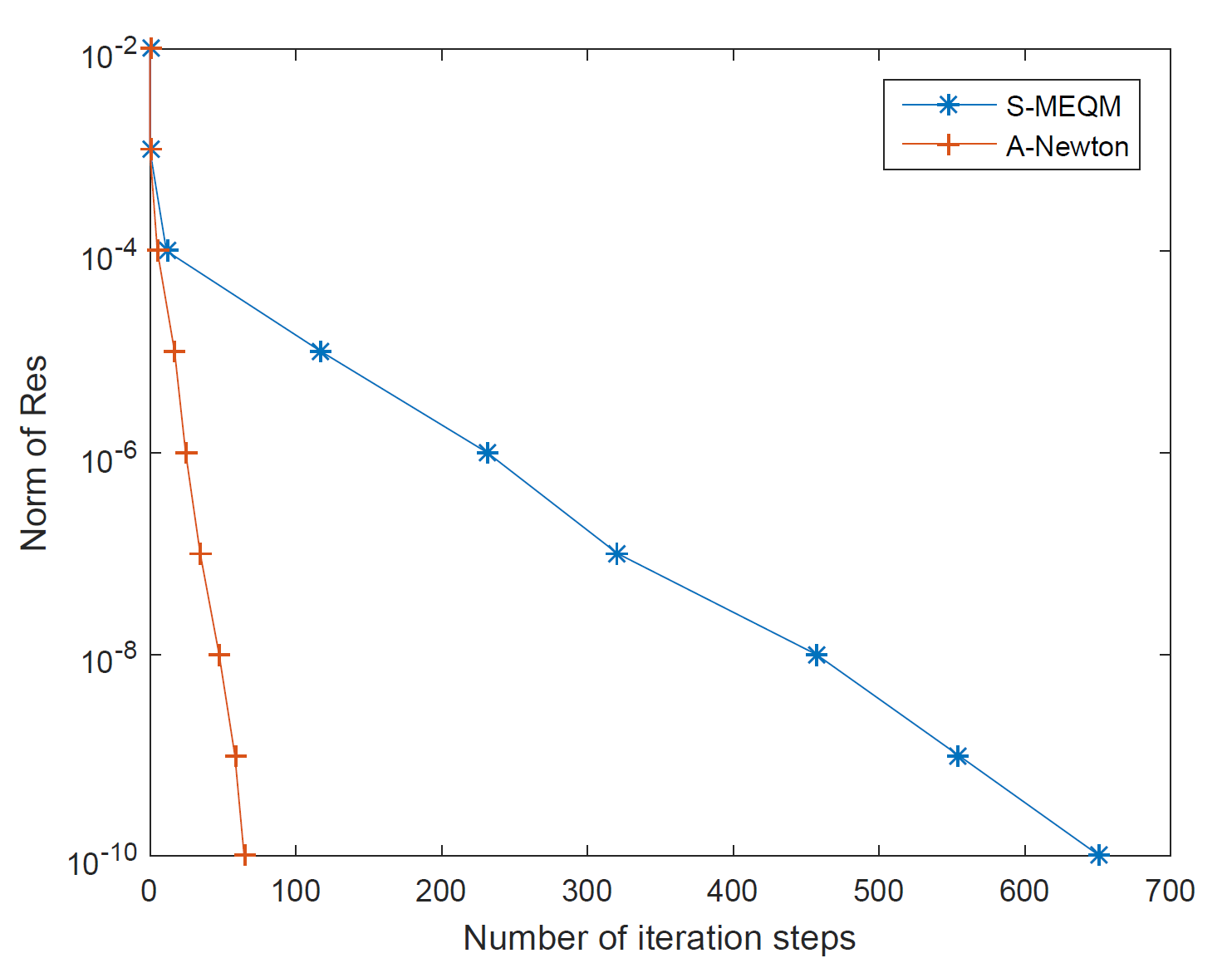}
\includegraphics[width=0.32\textwidth]{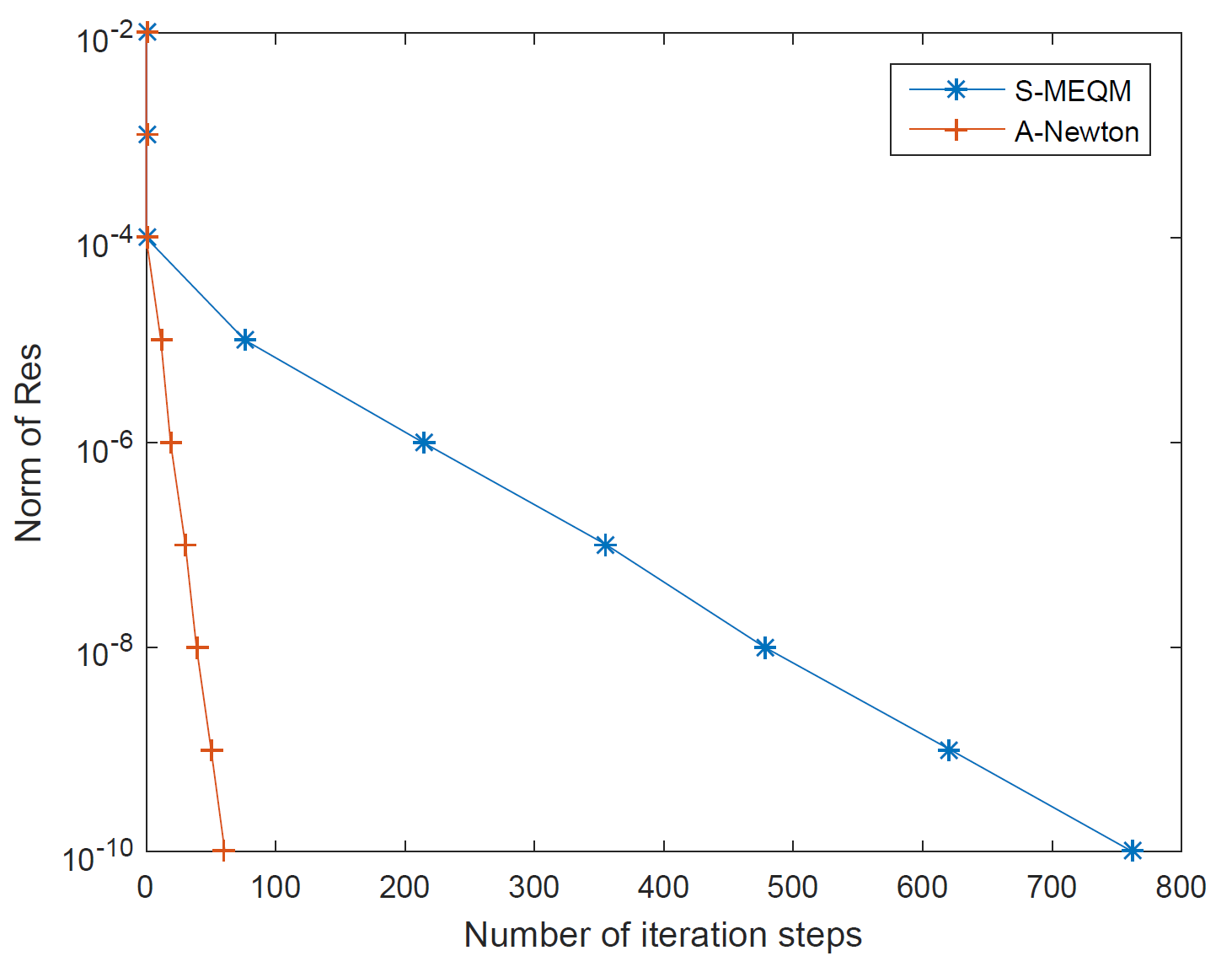}
\caption{\footnotesize Comparison between  S-MEQM and A-Newton on Problem 1: $n=10$ (left), $n=20$ (middle), $n=30$ (right)}

\end{figure}

\begin{figure}[H]\label{fig:2}
\centering
\includegraphics[width=0.34\textwidth]{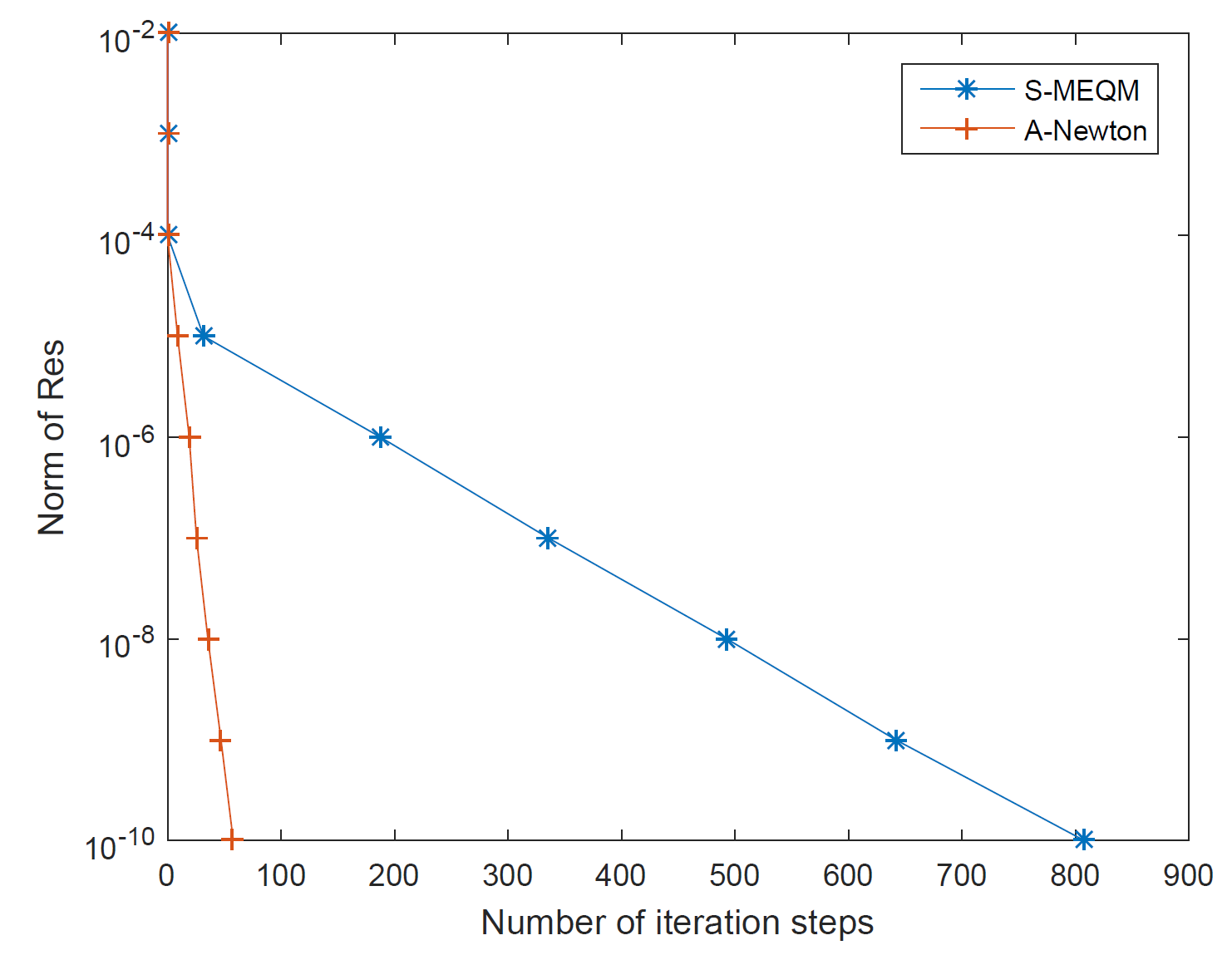}
\includegraphics[width=0.34\textwidth]{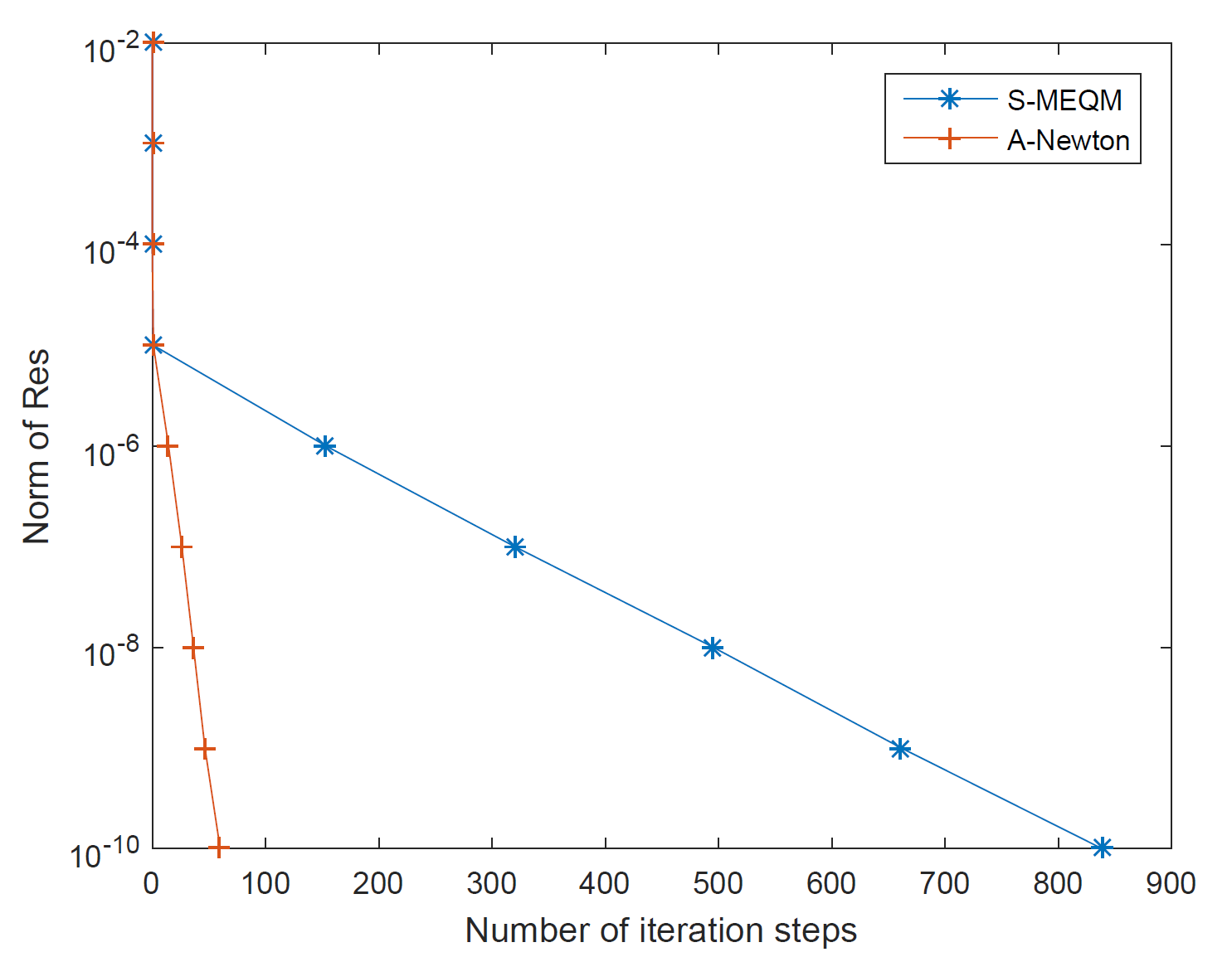}
\caption{\footnotesize Comparison between  S-MEQM and A-Newton on Problem 1: $n=40$ (left), $n=50$ (right)}
\end{figure}

\begin{figure}[H]
\centering
\includegraphics[width=0.32\textwidth]{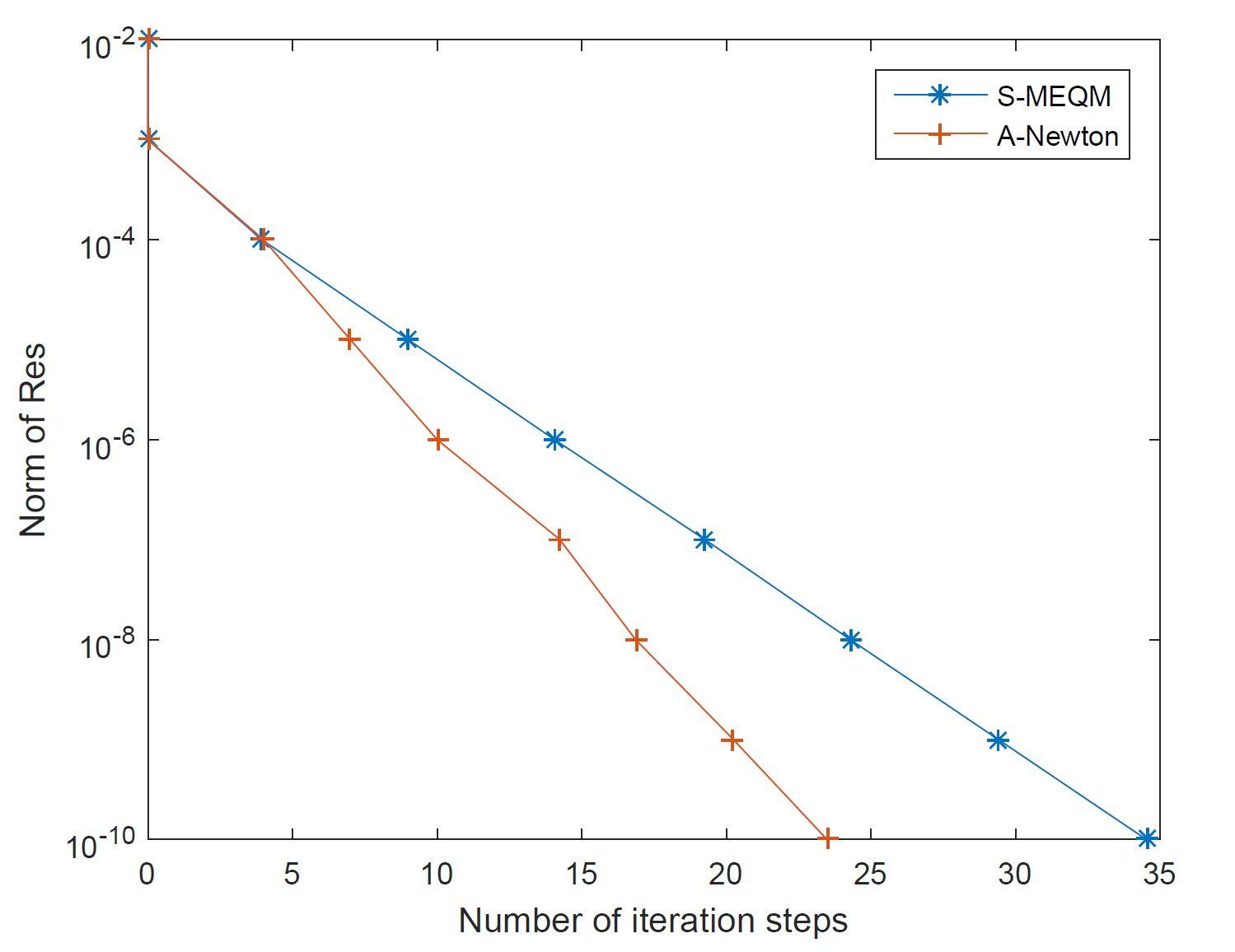}
\includegraphics[width=0.32\textwidth]{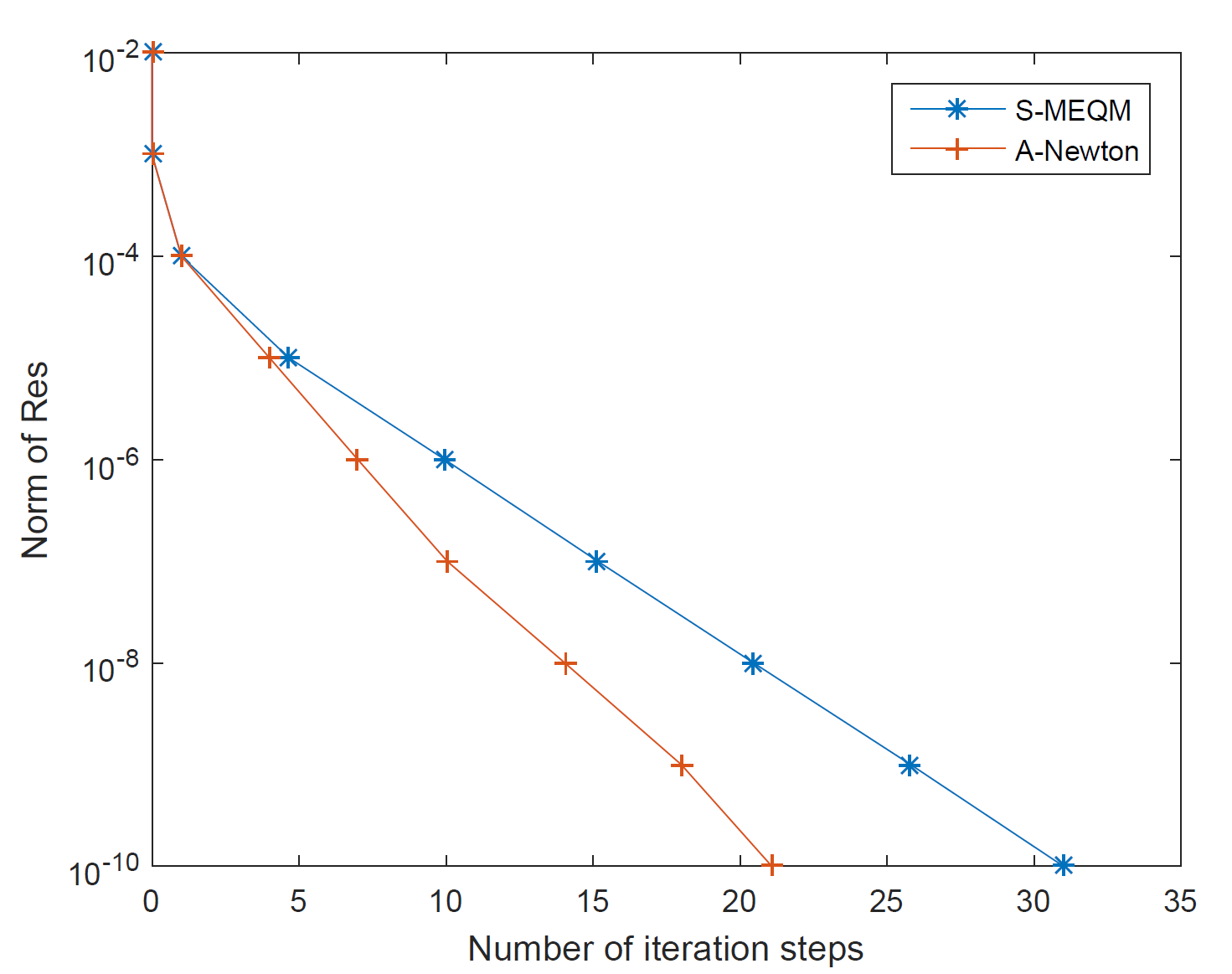}
\includegraphics[width=0.31\textwidth]{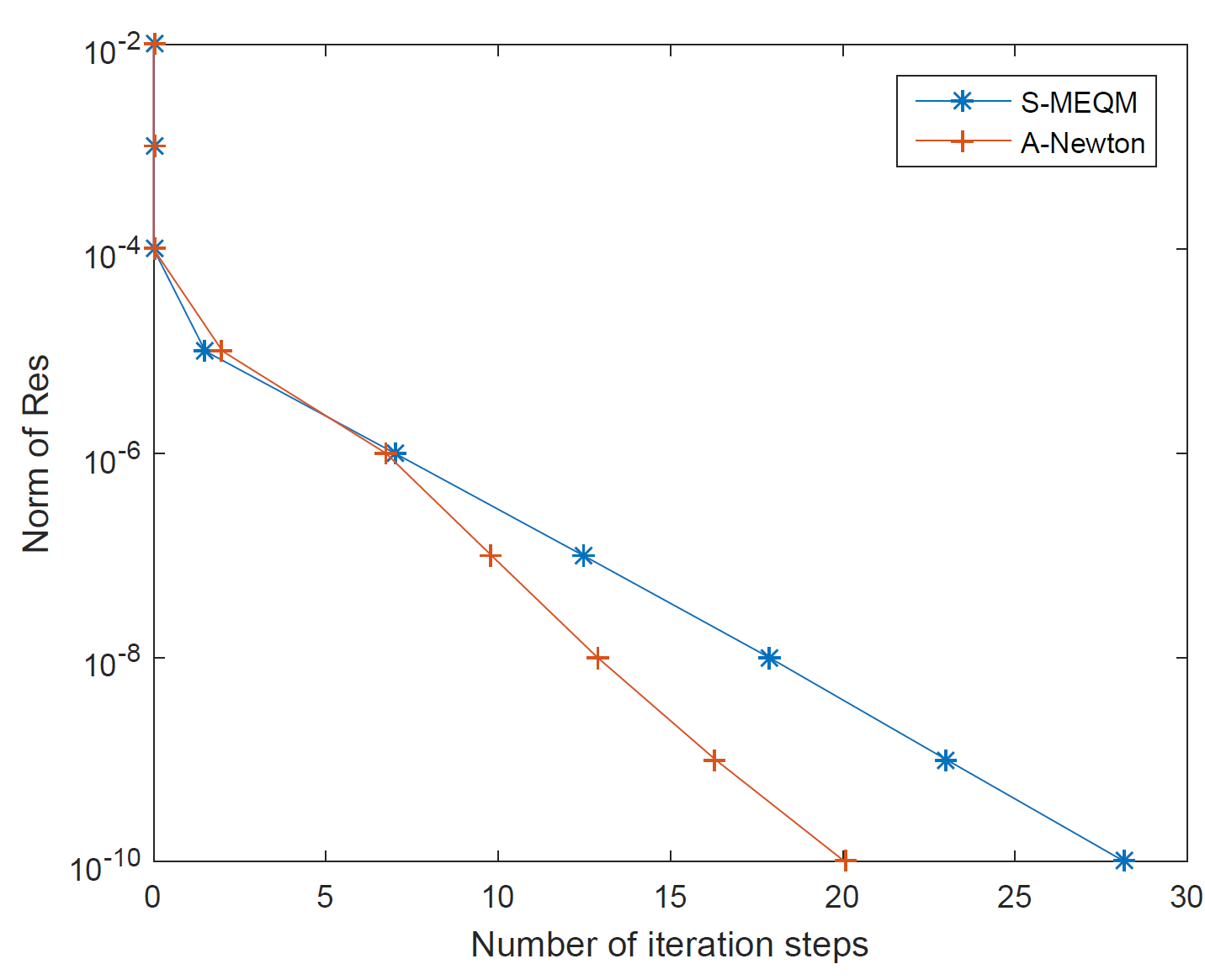}

\caption{\footnotesize Comparison between  S-MEQM and A-Newton on Problem 2: $n=10$ (left), $n=20$ (middle), $n=30$ (right)}

\end{figure}
\begin{figure}[H]
\centering
\includegraphics[width=0.33\textwidth]{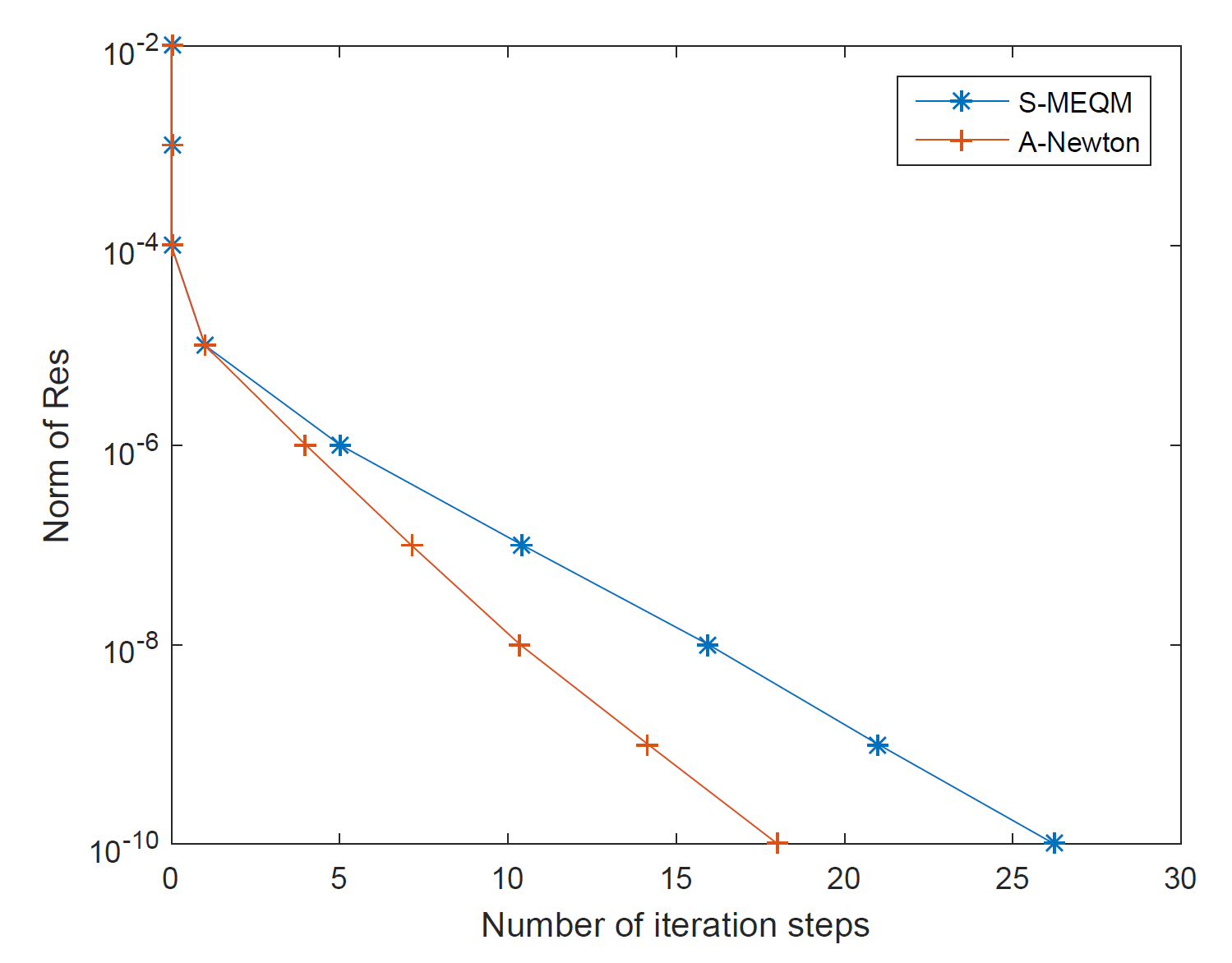}
\includegraphics[width=0.32\textwidth]{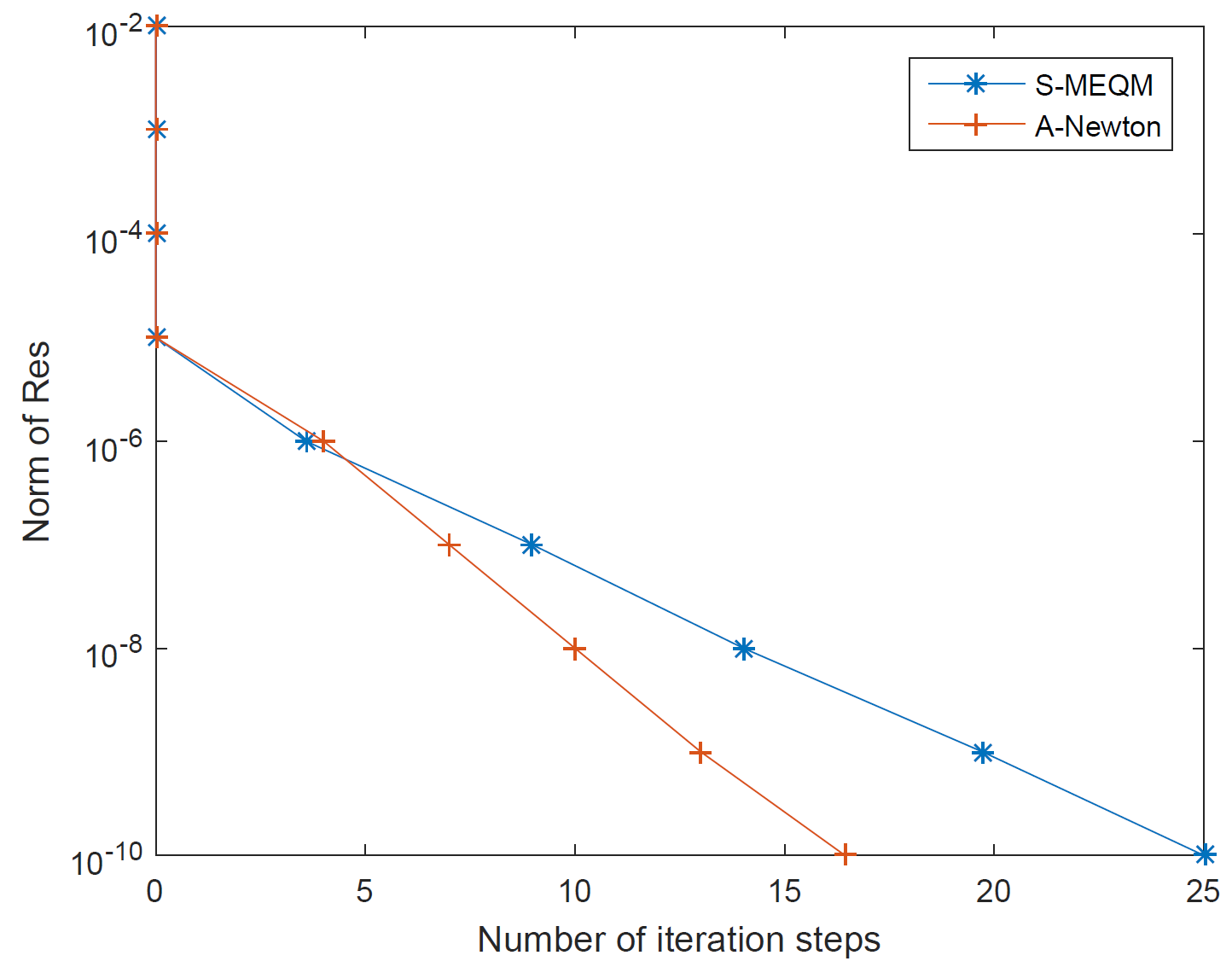}
\caption{\footnotesize Comparison between  S-MEQM and A-Newton on Problem 2: $n=40$ (left), $n=50$ (right)}
\end{figure}

\begin{figure}[H]
\centering
\includegraphics[width=0.32\textwidth]{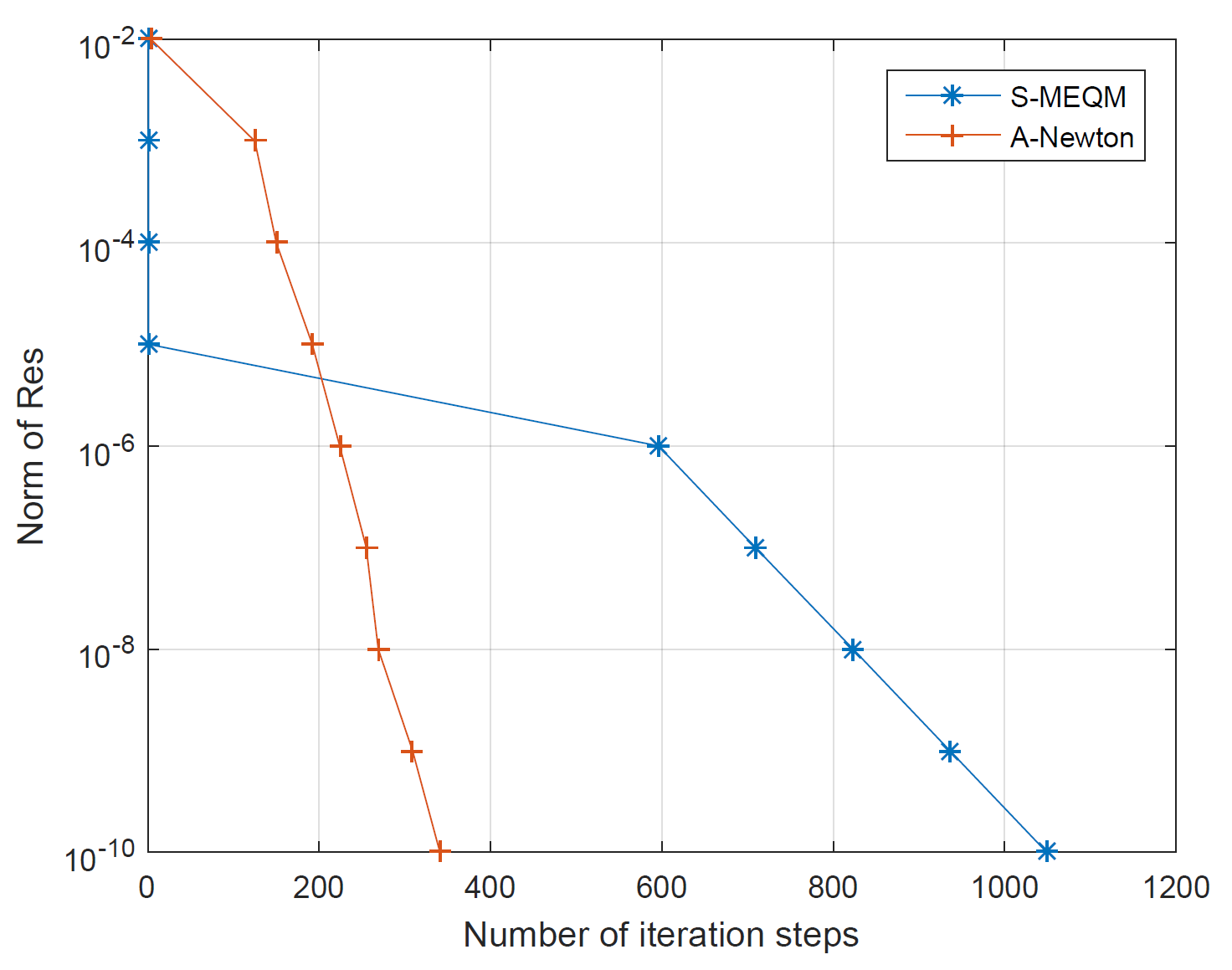}
\includegraphics[width=0.32\textwidth]{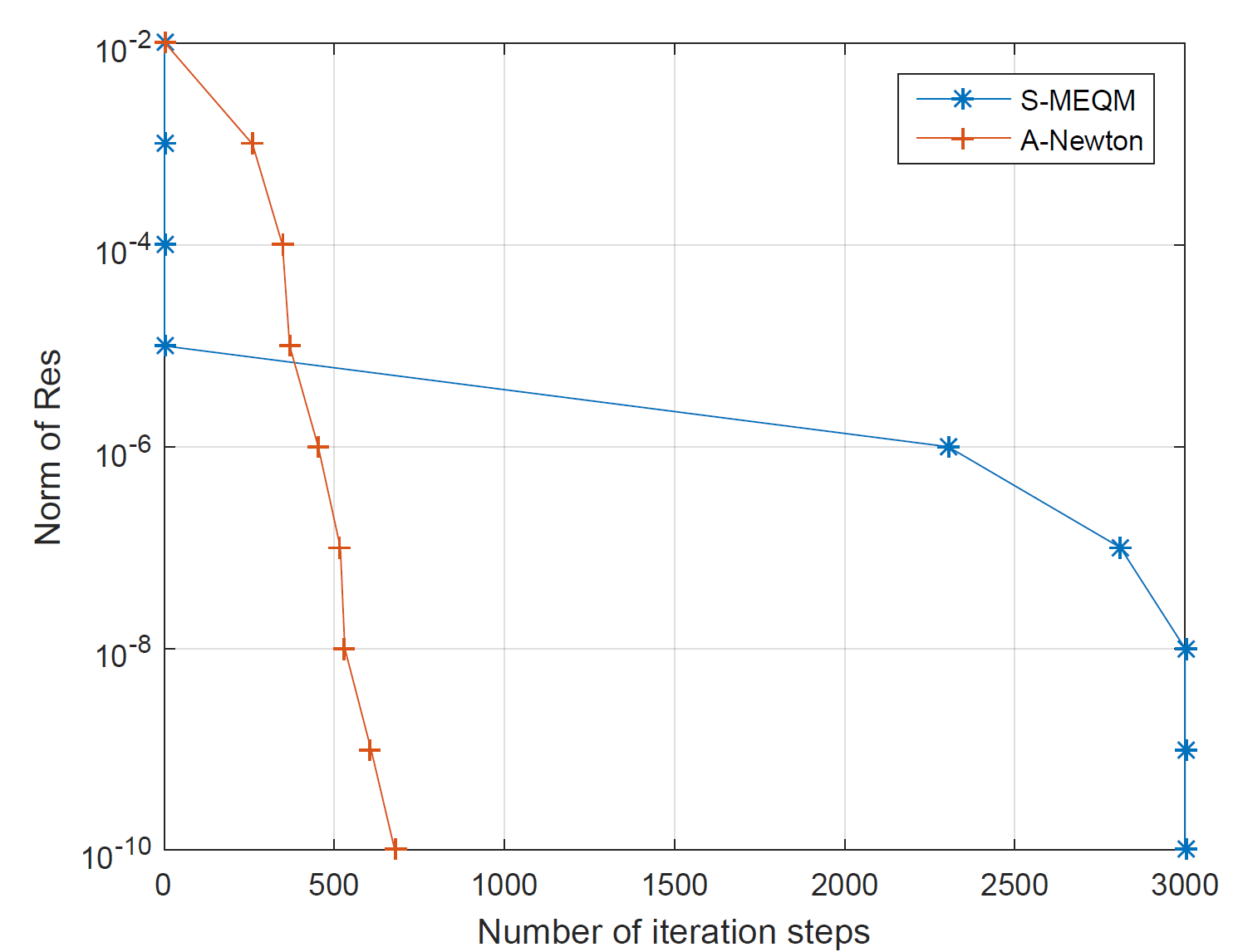}
\includegraphics[width=0.32\textwidth]{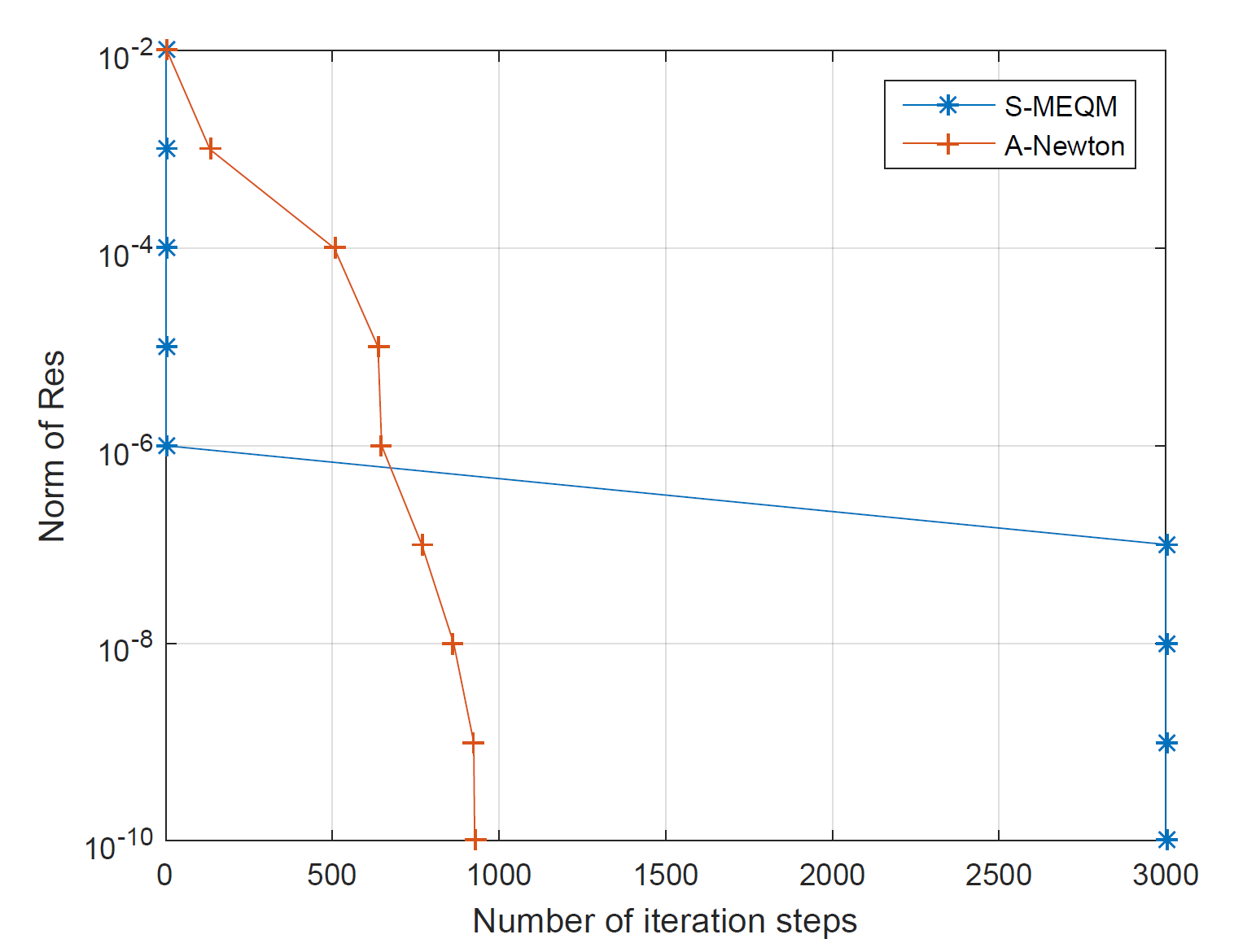}
\caption{\footnotesize Comparison between  S-MEQM and A-Newton on Problem 3: $n=10$ (left), $n=20$ (middle), $n=30$ (right)}

\end{figure}
\begin{figure}[H]
\centering
\includegraphics[width=0.33\textwidth]{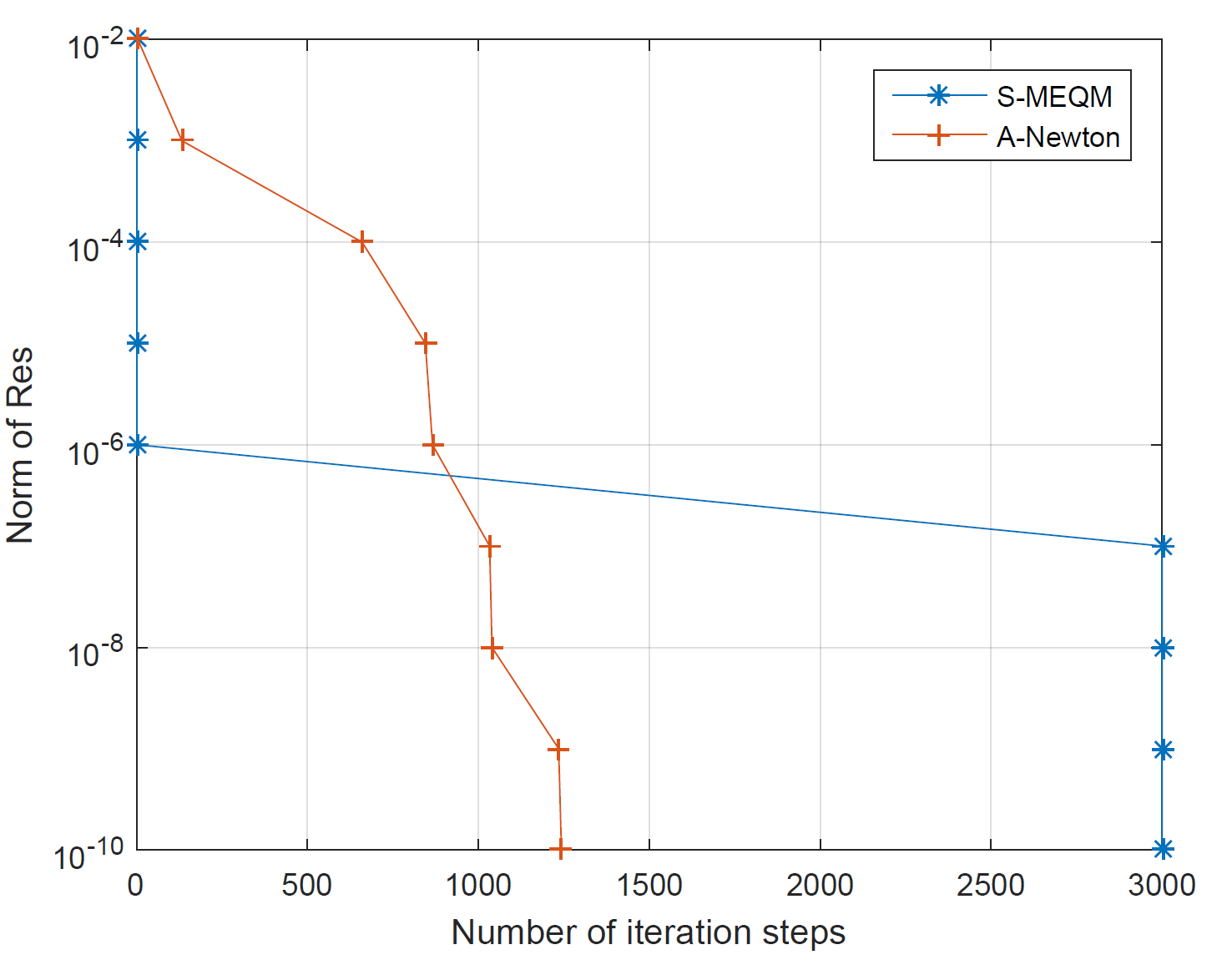}
\includegraphics[width=0.33\textwidth]{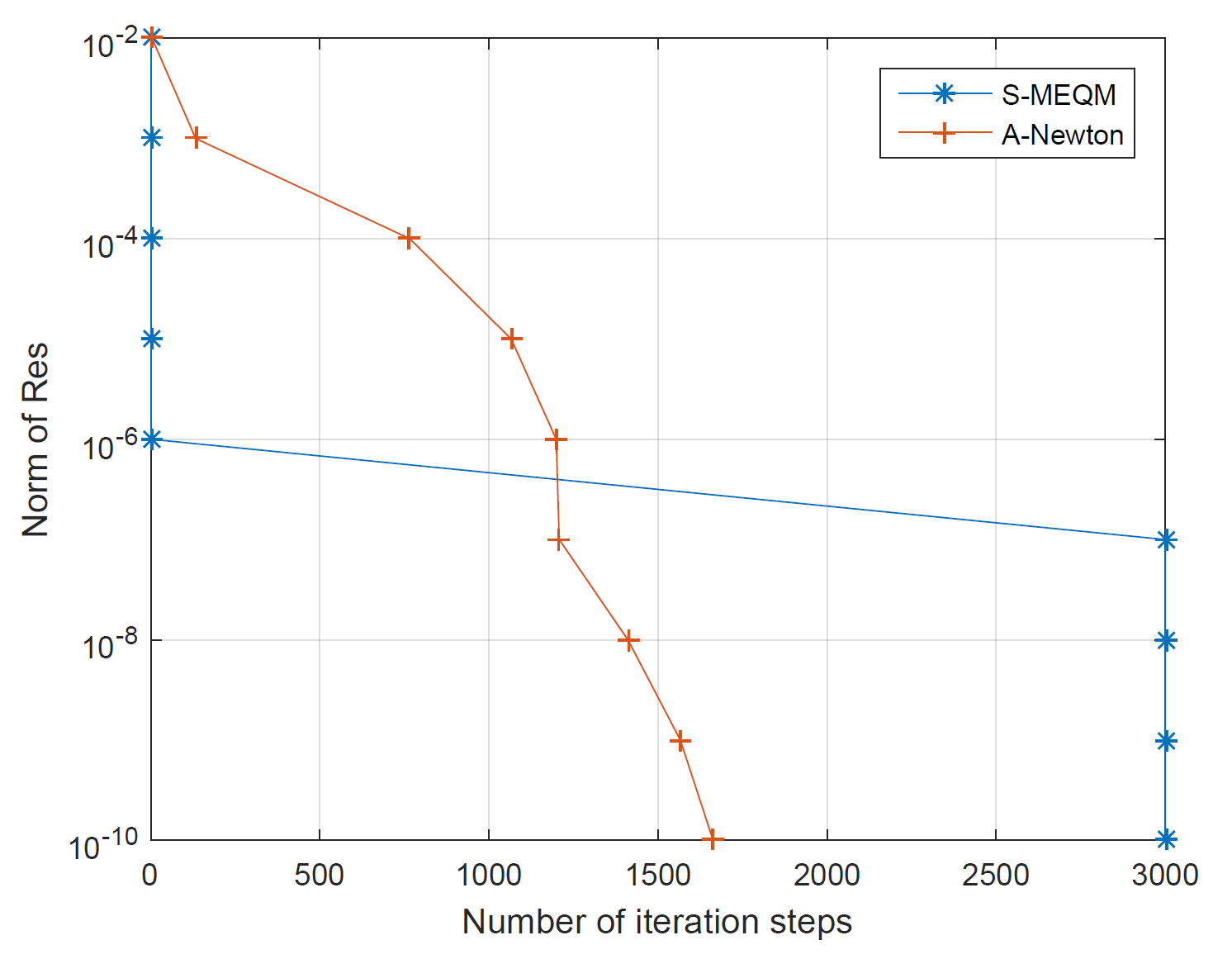}
\caption{\footnotesize Comparison between  S-MEQM and A-Newton on Problem 3: $n=40$ (left), $=50$ (right)}
\end{figure}

\begin{figure}[H]
\centering
\includegraphics[width=0.32\textwidth]{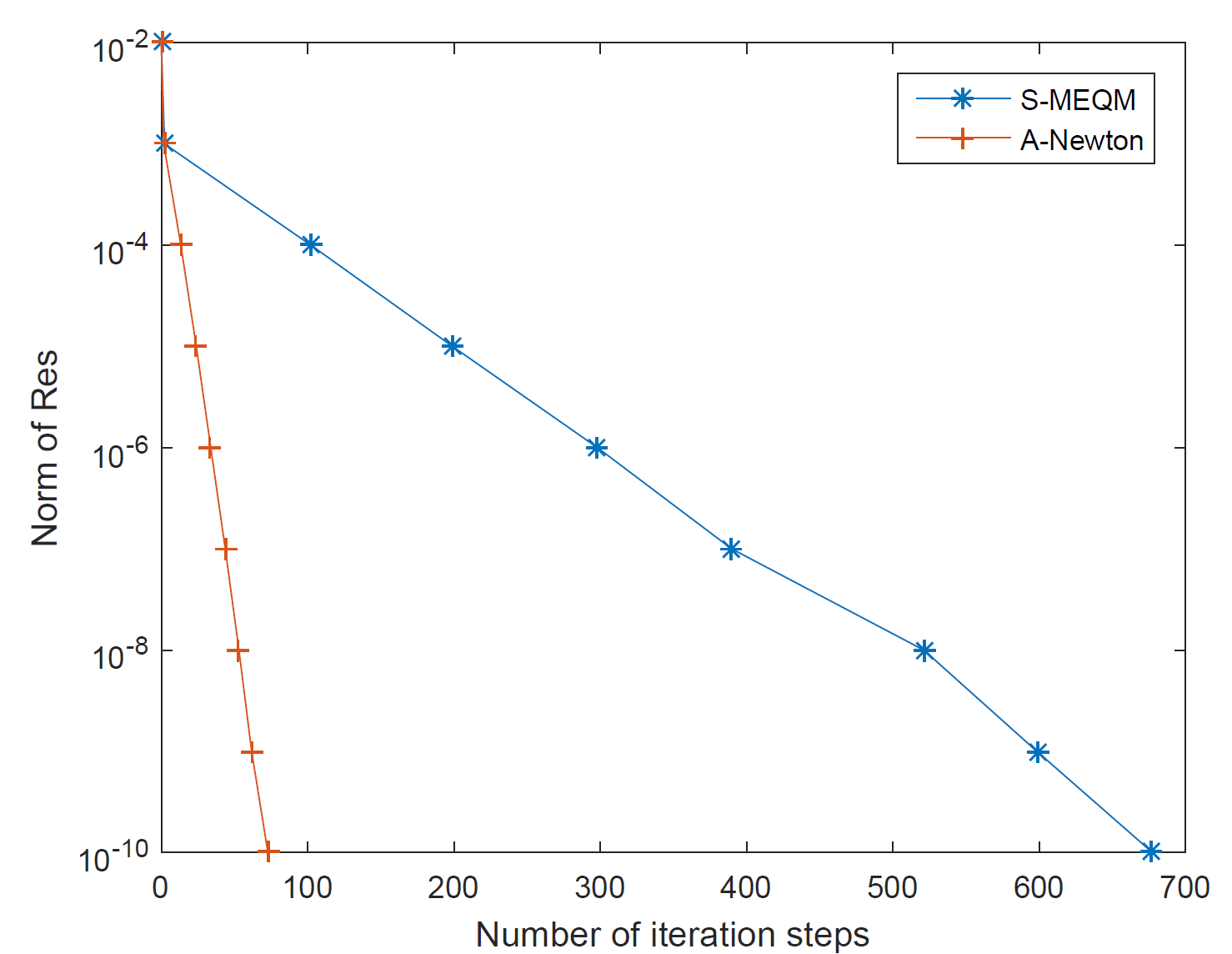}
\includegraphics[width=0.32\textwidth]{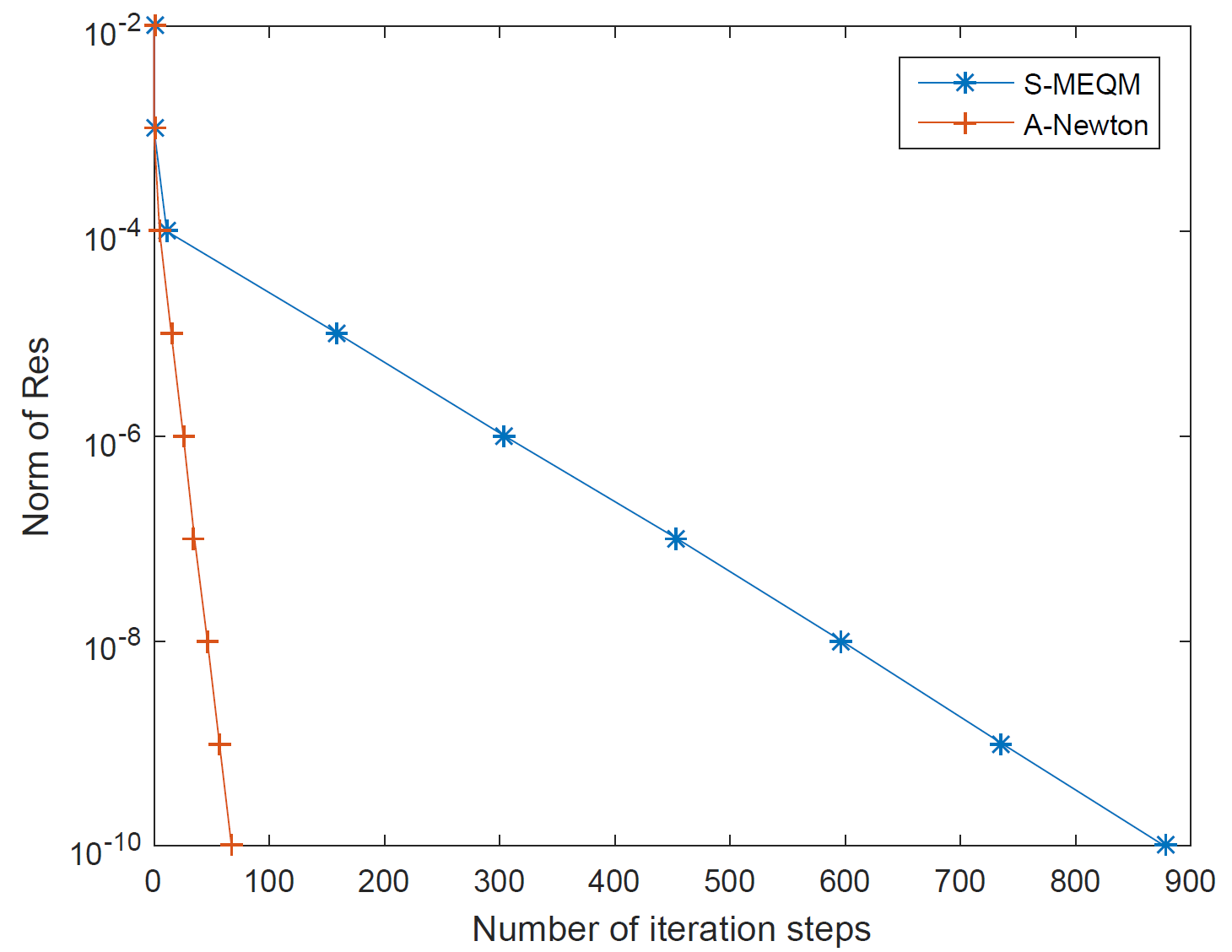}
\includegraphics[width=0.32\textwidth]{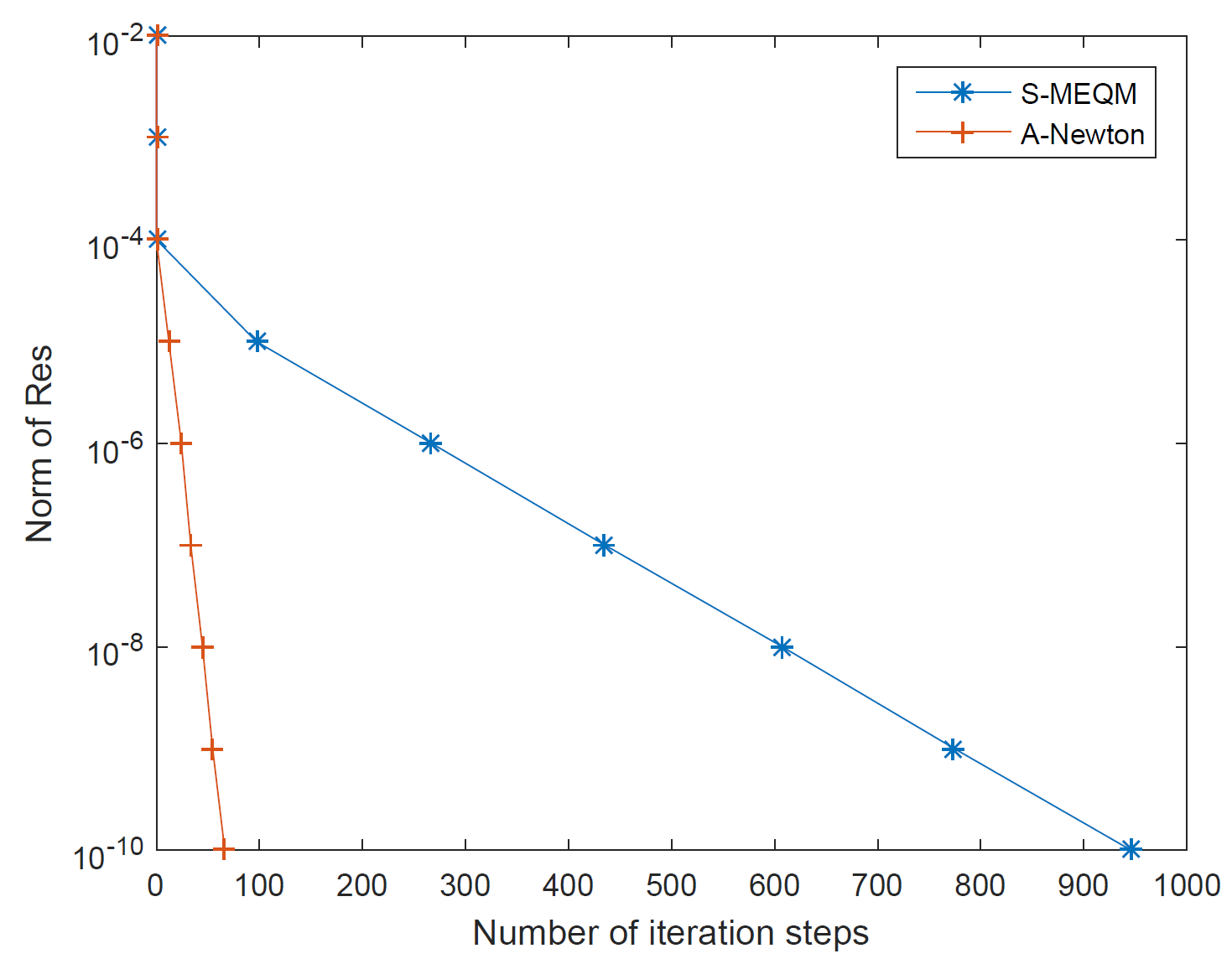}
\caption{\footnotesize Comparison between  S-MEQM and A-Newton on Problem 4: $n=10$ (left), $n=20$ (middle), $n=30$ (right)}

\end{figure}
\begin{figure}[H]
\centering
\includegraphics[width=0.34\textwidth]{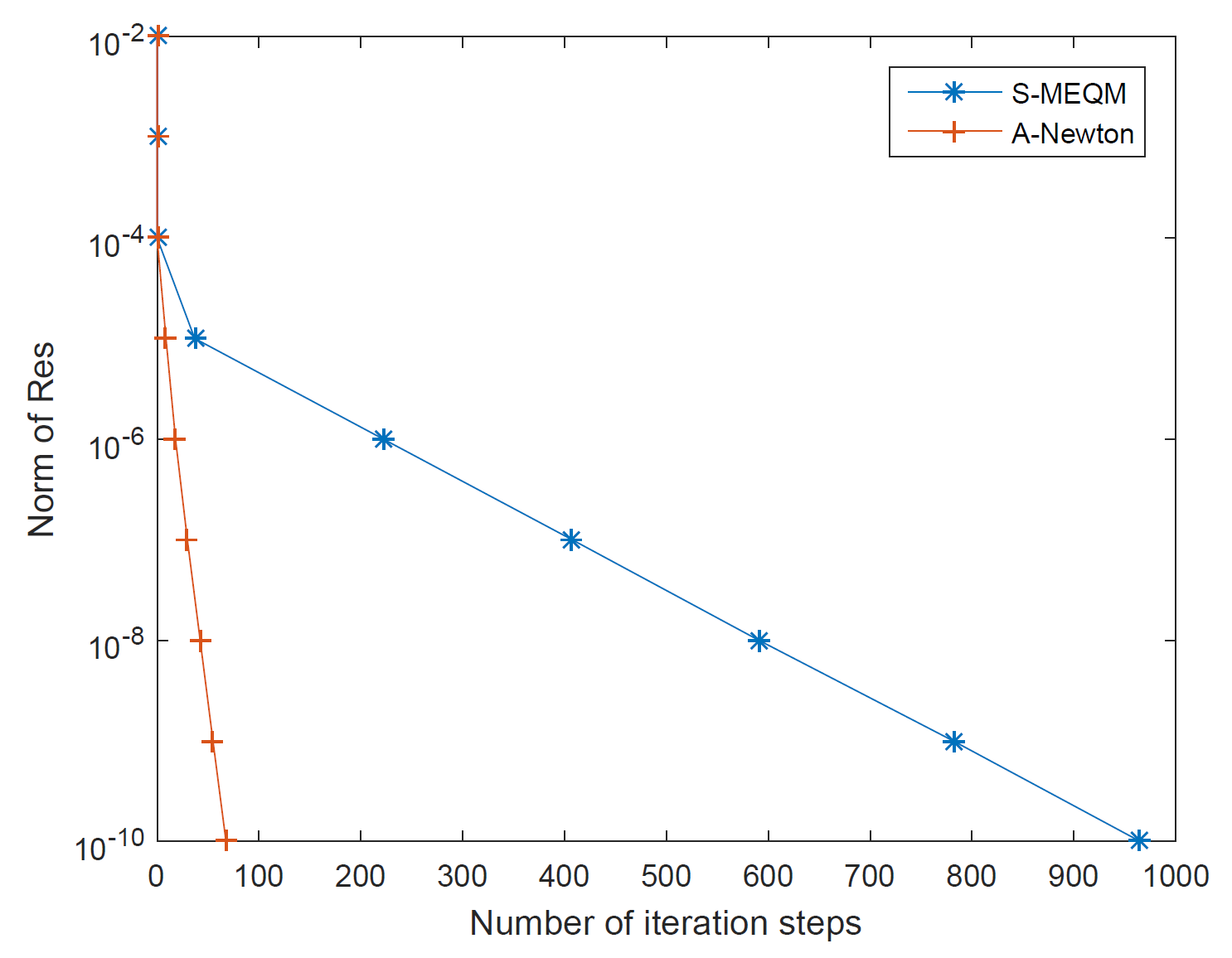}
\includegraphics[width=0.34\textwidth]{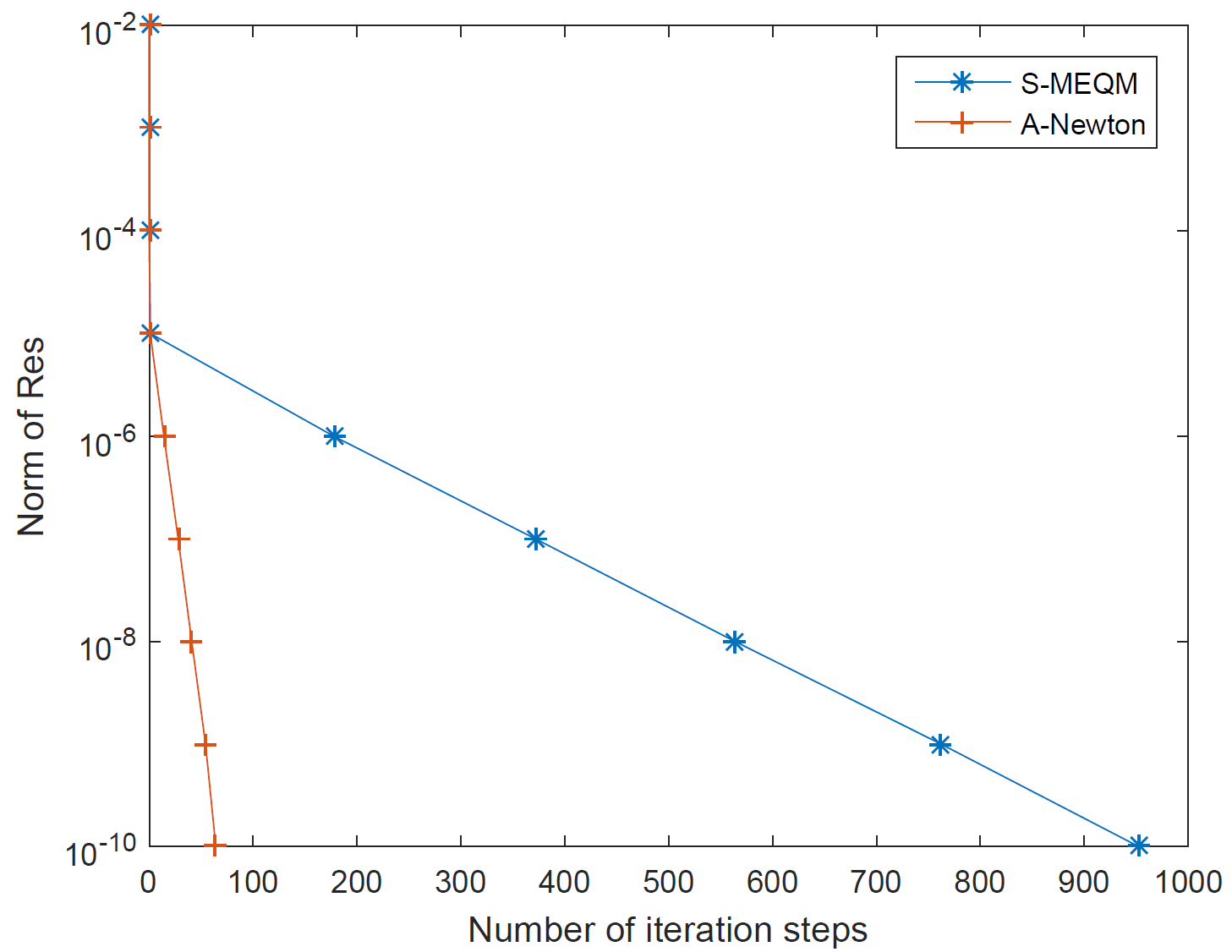}
\caption{\footnotesize Comparison between  S-MEQM and A-Newton on Problem 4: $n=40$ (left), $n=50$ (right)}
\end{figure}

\section{Conclusion}

We developed a sequential M-matrix equation based method and its improvement for solving M-tensor equations.
The methods can be regarded as approximate Newton methods. An advantage of the method is that
the subproblems of the methods are systems of linear equations with the same M-matrix as coefficient matrix.
If the initial point is appropriate chosen, the methods possess monotone convergence property.
Our numerical results show that when $\alpha \in (1,2)$ the performance of the sequence M-matrix method
can be better than the method with $\alpha\in (0,1]$. However, at the moment, we could not establish  the
related convergence theory. We leave it as a further research topic.

\vspace{3mm}

\noindent
{\bf Acknowledgement.} The authors would like to Prof. C. Ling of of Hangzhou Dianzi University who
constructed Example 2.1 which shows that an even order strong M-tensor equation may have multiple nonnegative solutions.

\end{document}